\newtheorem{theorem}{Theorem}
\newtheorem{lemma}{Lemma}
\newtheorem{corollary}{Corollary}
\newtheorem{definition}{D\'efinition}
\newtheorem{prop}{Property}
\newtheorem{remark}{Remark}
\newenvironment{proof}[1]{\par\noindent\underline{Proof #1}:\quad}%
{\unskip\nobreak\hfil\penalty50\hskip2em\null\nobreak\hfil%
$\Box$\parfillskip0pt\par\medskip}%versions F et A
\title{  Orthogonal polynomials with respect of a class of Fisher-Hartwig symbols.}
\author{ Philippe Rambour\thanks{Universit\'{e} de Paris Sud,
      B\^atiment 425; F-91405
Orsay Cedex;
tel : 01 69 15 57 28 ; fax 01 69 15 60 19
      \mbox{e-mail : philippe.rambour@math.u-psud.fr}
     }}
\date{}
\begin{document}
\maketitle
  \renewcommand{\abstractname}{Abstract}
     \begin{abstract}
     \textbf{Orthogonal polynomials with respect of a class of Fisher-Hartwig symbols.}\\
     In this paper we give an asymptotic  of the coefficients of the orthogonal polynomials on the unit circle, with  
     respect of a weight of type $\displaystyle{ f : \theta \mapsto \prod_{1\le j \le M} 
     \vert 1 - e^{i(\theta_{j}-\theta)}\vert ^{2\alpha_{j}} c}$ with 
     $\theta_{j}\in ]-\pi,\pi]$, $-\frac{1}{2} < \alpha_{j}<\frac{1}{2}$  and $c$ a sufficiently smooth function. 
     \end{abstract}
           
\textbf{Mathematical Subject Classification (2000)}

\textbf{Primary} 15B05, 33C45; 
\textbf{Secondary} 33D45, 42C05, 42C10.\\
% \`a v\'erifier ult\'erieurement.
\textbf{Keywords: } Orthogonal polynomials, Fisher-Hartwig symbols, Gegenbauer polynomials on the unit circle, inverse of Toeplitz matrices.
% produit de matrices, grandes d\'eviations, th\'eor\`eme de limite central.
%\eject 
\section{Introduction}
The study of the orthogonal polynomials on the unit circle is an old and difficult problem (see \cite{BS1}, \cite{BS2} or \cite{SZEG}). 
Here we are interested in the asymptotic of the coefficient of  the orthogonals polynomial with respect of an Fisher-Hartwig symbol. A Fisher-Hartwig symbol is a function $\psi$ defined on the united circle by   
 $ \psi : e^{i\theta} \mapsto \prod_{1\le j \le M} 
    \displaystyle{ \vert e^{i(\theta-\theta_{j})} - 1\vert ^{2\alpha_{j}} e^{i\beta_{j} (\theta-\theta_{j}-\pi)} c(e^{i\theta})}$ with  
   $0<\theta,\theta_{j} <2\pi$,  $-\frac{1}{2}<\Re(\alpha_{j})$ and  for all $j, 1\le j \le M$ and where the function $c$ is assumed sufficiently smooth, continuous, non zero, and have winding number zero (see \cite{BAS1}). Here we consider the class of symbols 
   $\displaystyle{ f: e^{i\theta} \mapsto \prod_{1\le j \le M} 
     \vert e^{i(\theta-\theta_{j})} - 1\vert ^{2\alpha_{j}}  c(e^{i\theta})}$ with  
     $-\frac{1}{2}<\alpha_{j}<\frac{1}{2}$ and $c$ a regular function sufficiently smooth. It is said that 
     a function $k$ is a regular function on the united circle $\mathbb T$  when $k(\theta)>0$ for all 
     $\theta \in \mathbb T$ and $k \in L^1(\mathbb T)$.  
     In  \cite{ML3} Martinez-Finkelstein, Mac-Laughin and Saff give the asymptotic behviour of this polynomials.
     If $M=2$, $\alpha_{1}= \alpha_{2}$ and  $\theta_{1}=-\theta_{2}$, $\theta_{1}\neq 0$ we can remark that these polynomials are Gegenbauer polynomials (\cite{ {B}, {RamBeau}, {WWCG}}) (see Corollary 
\ref {GEGEN}) . The main tool to compute this is the study 
of the Toeplitz matrix with symbol $f$. Given a function $h$ in $L^1 (\mathbb T)$ we denote by 
$T_{N}(h)$ the Toeplitz matrix of order $N$ with symbol $h$ the $(N+1)\times (N+1)$ matrix defined by 
$$ \left( T_{N}(h)\right) _{i+1,j+1} = \hat h(j-i) \quad \forall i,j \quad 0\le i,j \le N $$
where $\widehat m (s)$ is the Fourier coefficient of order $s$ of the function $m$ (see, for instance \cite {Bo.4} and \cite {Bo.5}). There is a close connection between Toeplitz matrices and orthogonal polynomials on the complex unit circle. Indeed the coefficients of the orthogonal polynomial of degree $N$ with respect of $h$ are also the coefficients of the last column 
of $T_N^{-1} (h)$ except for a normalisation (see \cite{Ld}). 
Here we give an asymptotic expansion of the entries  
$\left(T_{N} \left(  f_{\alpha}\right) \right)_{k+1,1}^{-1}$ (Theorem \ref{COEF}). Using 
the symmetries of the Toeplitz matrix $T_{N}(f_{\alpha})$, we deduce from this last result an asymptotic of $\left(T_{N} (f_{\alpha}) \right)_{N-k+1,N+1}^{-1}$.\\
The proof of our main Theorem  often refers to results of \cite{RS10}. 
In this last work we have treated the case of the symbols $h_{\alpha}$defined by 
$ \theta \mapsto (1-\cos \theta)^\alpha c$ whith $-\frac{1}{2} < \alpha \leq \frac{1}{2}$ and  
the same hypothesis on  $c$ as on $c_{1}$. 
We have stated the following Theorem which is an important tool in the demonstration of Theorem \ref{COEF}.
\begin{theorem}[\cite{RS10}] \label{PREDIZERO}
If $-\frac{1}{2} <\alpha \le \frac{1}{2}$, $\alpha\neq0$
we have for $c\in A(\mathbb T ,\frac{3}{2})$ and 
$0<x<1$ 
$$ c(1) \left( T_N ( h_{\alpha})\right)^{-1} _{[Nx]+1,1} =  N^{\alpha-1} \frac{1}{\Gamma
(\alpha)}x^{\alpha-1} (1-x)^\alpha + o(N^{\alpha-1}).
$$
uniformly in $x$ for $x\in [\delta_1,\delta_2]$
with $0<\delta_1<\delta_2<1$,
\end{theorem}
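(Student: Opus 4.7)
The plan is to split the argument into the pure singular part $g_\alpha(\theta)=(1-\cos\theta)^\alpha$ and a perturbative treatment that absorbs the smooth factor $c$. Writing $h_\alpha = c(1)\, g_\alpha (1+u)$ with $u(\theta)=c(\theta)/c(1)-1$, so that $u(1)=0$ and $u\in A(\mathbb{T},\tfrac{3}{2})$, the decomposition $T_N(h_\alpha)=c(1)T_N(g_\alpha)+c(1)T_N(g_\alpha u)$ together with the resolvent identity
$$
T_N(h_\alpha)^{-1} = \frac{1}{c(1)}\, T_N(g_\alpha)^{-1} - T_N(g_\alpha)^{-1}\, T_N(g_\alpha u)\, T_N(h_\alpha)^{-1}
$$
reduces matters to (i) computing the asymptotic of the first column of $T_N(g_\alpha)^{-1}$, and (ii) showing that the correction term is $o(N^{\alpha-1})$ uniformly in $x\in[\delta_1,\delta_2]$.

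For step (i), I would use the explicit description of the orthogonal polynomials with respect to the pure symbol $g_\alpha$, which are well-known to coincide with suitably normalized Gegenbauer / ultraspherical polynomials on the circle. This yields a closed form for $(T_N(g_\alpha))^{-1}_{k+1,1}$ as a product (or a short sum of products) of Gamma ratios of the shape $\Gamma(N-k+\alpha)\Gamma(k+\alpha)/[\Gamma(k+1)\Gamma(N-k+1)\Gamma(\alpha)]$, up to an overall factor depending on $N$ and $\alpha$. Setting $k=[Nx]$ and invoking the standard asymptotic $\Gamma(n+a)/\Gamma(n+b)\sim n^{a-b}$ produces the claimed leading term $N^{\alpha-1}\Gamma(\alpha)^{-1}x^{\alpha-1}(1-x)^\alpha$, with uniform convergence on compact subsets of $(0,1)$.

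For step (ii), the vanishing $u(1)=0$ combined with the regularity $u\in A(\mathbb{T},\tfrac{3}{2})$ improves the decay of the Fourier coefficients of $g_\alpha u$ over those of $g_\alpha$ itself. One then estimates the triple product entrywise by a Schur-type convolution bound on $T_N(g_\alpha u)$, combined with the column-by-column bound from (i) and an a priori bound on $T_N(h_\alpha)^{-1}$ obtained by bootstrapping. This yields an $o(N^{\alpha-1})$ correction, uniformly on $[\delta_1,\delta_2]$.

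The principal obstacle lies in step (ii). The middle factor $T_N(g_\alpha u)$ cannot be bounded directly because $g_\alpha$ itself is only Hölder-type regular at $\theta=0$; the smoothness index $\tfrac{3}{2}$ for $c$ is calibrated precisely so that, after using $u(1)=0$ to cancel the singularity, the product $g_\alpha u$ has enough additional decay to close the estimate. When $\alpha<0$ the situation is sharper still: the leading column entries already grow with $N$, so a naïve bounding by the leading term loses, and the vanishing of $u$ at $1$ must be exploited in an essential way to avoid a cumulative logarithmic loss in the uniformity on $[\delta_1,\delta_2]$.
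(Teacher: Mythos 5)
First, a factual clarification: the paper does not actually prove this statement here; it is quoted verbatim from \cite{RS10} and used as a black box. The only indication of how \cite{RS10} proves it is the proof of the more general Theorem~\ref{COEF}, which explicitly mirrors the \cite{RS10} machinery: a Wiener--Hopf factorization $f=g\bar g$ with $g$ containing \emph{both} the singular part and the outer factor $c_1$, the Hankel-operator inversion formula of Lemma~\ref{INVERS}/Corollary~\ref{INVERS2}, and then asymptotics for $\beta_k=\widehat{g^{-1}}(k)$, $\gamma_k=\widehat{g/\bar g}(k)$ and the auxiliary functions $F_{N,\alpha}$. Your proposal takes a genuinely different route --- a resolvent perturbation of $T_N(h_\alpha)$ around the pure-symbol matrix $T_N(g_\alpha)$ with $u(1)=0$ used to gain decay. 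That is not merely a cosmetic difference: in the paper's approach the smooth factor is absorbed into $g$ from the outset, so one never has to control a bare middle factor $T_N(g_\alpha u)$ sandwiched between two inverses.

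There is a concrete gap in your step (i). The orthogonal polynomials with respect to $(1-\cos\theta)^\alpha$ are \emph{not} Gegenbauer polynomials on the circle: that weight has a single Fisher--Hartwig singularity at $\theta=0$, whereas (as the paper itself points out in Corollary~\ref{GEGEN}) the Gegenbauer case requires the symmetric pair $\theta_1=-\theta_2\neq 0$ with equal exponents. More importantly, the Gamma-ratio you propose, $\Gamma(N-k+\alpha)\Gamma(k+\alpha)/[\Gamma(k+1)\Gamma(N-k+1)\Gamma(\alpha)]$, is symmetric under $k\leftrightarrow N-k$ and, by your own asymptotic $\Gamma(n+a)/\Gamma(n+b)\sim n^{a-b}$, yields $N^{2\alpha-2}\,x^{\alpha-1}(1-x)^{\alpha-1}/\Gamma(\alpha)$ up to an $x$-independent constant. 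The target is $N^{\alpha-1}\,x^{\alpha-1}(1-x)^{\alpha}/\Gamma(\alpha)$, which is \emph{not} symmetric in $x\leftrightarrow 1-x$; no overall factor depending only on $N$ and $\alpha$ can repair the exponent of $(1-x)$. So the derivation as written does not produce the claimed leading term, and step (i) needs a different (and correct) exact or asymptotic formula for the first column of $T_N(g_\alpha)^{-1}$. Step (ii) is stated as a plan rather than a proof (the Schur-type bound and the bootstrapping for $T_N(h_\alpha)^{-1}$ are not carried out), so even granting (i) the argument is incomplete.
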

with $ \mathbb T = \mathbb R / 2\pi \mathbb Z$, and the definition 
\begin{definition}
For all positive real $\tau$ we denote by 
$A(\mathbb T, \tau)$ the set 
$$A(\mathbb T, \tau)= \{h \in L^2(\mathbb T)\vert 
\sum_{s\in \mathbb Z} \vert s^\tau \hat{h}(s) \vert <\infty\}$$
\end{definition}
 This theorem has also been proved for $\alpha \in 
\mathbb N^*$ in \cite{RS04} and for $\alpha\in ]\frac{1}{2},+\infty[ 
\setminus \mathbb N^*$ in \cite{RS1111}.\\
 
\vspace{1cm}

  The  results of this paper are of interest in the study of the random matrices (see \cite {De98}, \cite{D01}) and in the analysis of time series . Indeed it is known that the $n$-th covariance matrix of a time series is a positive Toeplitz matrix.
  If $\phi$ is the symbol of this Toeplitz matrix, $\phi$ is called the spectral density of 
the time series. The time series with spectral density is the function 
$\displaystyle{f: \theta \mapsto \vert e^{i\theta}-e^{i\theta_{0}}\vert ^{2\alpha} 
 \vert e^{i\theta}-e^{-i\theta_{0}}\vert ^{2\alpha} c }$ with $\theta_{0}\in ]0,\pi[$ 
are also called  
GARMA processes. Moreover 
the time series with spectral density is the function 
$\displaystyle{f: \theta \mapsto\prod_{j=1}^k \vert e^{i\theta}-e^{i\theta_{j}}\vert ^{2\alpha_{j}} 
 \vert e^{i\theta}-e^{-i\theta_{j}}\vert ^{2\alpha_{j}} c }$ with $\theta_{0}\in ]0,\pi[$ 
are k-factors
GARMA processes \cite{{AKA3},{AKA4}}.
For more on this processes we refer the reader to 
\cite{{B},{RamBeau},{WWCG}}, 
and to \cite{{Dahlhaus}, {DOT},  {GS}, {B}, {BrDa}, {KIRTEY}, {YILUHU}} for Toeplitz matrices in times series. \\
On the other hand a random matrice is characterized by the distribution of its eigenvalues. For the case of
random unitary matrices an important case is the Dyson generalized circular unitary ensemble the density of the vector $(\theta_{1},\theta_{2}, \cdots,\theta_{N})$ of eigenvalue angles is given for a $N \times N$ matrix is (\cite{Nagao}, \cite{Nagao2}, 
\cite{Nagao3}, \cite{TW01}) 
$$ P_{N} ( \theta_{1},\theta_{2},\cdots,\theta_{N}) = \prod _{1\le j\le N} f(\theta_{j}) 
\prod_{1\le j\le k \le N} \vert e^{i\theta_{j}}-e^{i\theta_{k}}\vert ^2,$$
where $f$ is generally a regular function (see 
\cite{JOHA1}), but it also can be a Fisher-Hartwing symbol. For the Dyson generalized circular ensemble 
the correlation function is written by means of the Christofel-Darboux kernel $K_{N}$ ( see
\cite{SZEG}) associated to the orthogonal polynomials with respect of the weight $f$.\\ 
Lastly it is important to observe that Theorem \ref{COEF} provides the entries and the trace of 
the matrix $T_{N}^{-1} (f)$ with 
$f= \displaystyle{\prod_{1le j\le M} \vert 1-e^{i(\theta)-\theta_{j})}\vert ^{2\alpha_{j}} 
 c}$ (see \cite{RS10}, \cite{RS09}).\\
Now we have to precise the deep link between 
the orthogonal polynomials and the inverse of the Toeplitz matrices.\\
Let $T_n(f)$ a Toeplitz matrix with symbol $f$ and $(\Phi_{n})_{n\in \mathbb N}$ the orthogonal polynomials 
with respect to $f$ (\cite{Ld}). To have the polynomial used for the prediction theory we put 
\begin{equation} \label{predizero1} 
\Phi^*_n(z)=\sum_{k=0}^n\frac {(T_n(f))^{-1}_{k+1,N+1}}
{(T_n(f))^{-1}_{N+1,N+1}}z^{k},~\mid z\mid =1.
\end{equation}
We define the polynomial $\Phi^*_n$ (see \cite{BS1}) as  
 \begin{equation} \label{predi} 
  \Phi_n ^* (z) = z^n \bar \Phi_n (\frac{1}{z}),
\end{equation}
that implies, with the symmetry of the Toeplitz matrix
\begin{equation} \label{predizero}
\Phi^*_n(z)=\sum_{k=0}^n\frac {(T_n(f))^{-1}_{k+1,1}}
{(T_n(f))^{-1}_{1,1}}z^{k},~\mid z\mid =1.
\end{equation}
The polynomials $\tilde{\Phi_{n}}=\Phi_n ^* \sqrt{(T_n(f))^{-1}_{1,1}}$ are often called predictor polynomials. As we can see in the previous formula their coefficients are, up to a normalisation, the entries of the first
column of ${T_n (f)}^{-1}.$ \\
  
\section{Main results}
\subsection{Main notations}

In all the paper we consider the symbol defined by 
$\displaystyle{ f : \theta \mapsto \prod_{1\le j \le M} 
     \vert 1 - e^{i(\theta-\theta_{j})}\vert ^{2\alpha_{j}} c}$ where 
 $c =\Bigl\vert \frac{ P}{ Q }\Bigr\vert^2$ with $P,Q \in \mathbb R [X]$,  without zeros on the united circle, 
 $-\frac{1}{2}<\alpha_{j} <\frac{1}{2}$ and $0\le \theta_{j'}\neq \theta_{j}<2\pi.$ 
 We consider also the function
 $\displaystyle{\tilde f : \theta \mapsto \prod_{1\le j \le M} 
     \vert 1 - e^{i(\theta-\theta_{j})}\vert ^{2\alpha_{j}} }$
 We have $c=c_{1}\bar c_{1}$ with $c_{1} =\frac{P}{Q}$. Obviously 
 $c_{1}\in H^{2+}(\mathbb T)$ since $H^{2+} (\mathbb T)=\{ h \in L^2(\mathbb T)\vert u<0 \implies\hat h (u) =0 \}$. If $\chi$ is the function $\theta \mapsto e^{i\theta}$ and if
 $\chi_{j}= e^{i\theta_{j}} $ for all $j, 1\le j \le M$
we put $\displaystyle{g=\prod_{j=1}^M (1-\overline{ \chi_{j}} \chi) ^{\alpha_{j}} c_{1}}$ and 
$\displaystyle{\tilde g=\prod_{j=1}^M (1-\overline{ \chi_{j}} \chi) ^{\alpha_{j}} }$. Clearly 
$g, \tilde g \in H^{2+} (\mathbb T)$ and $f=g\overline{ g}, \tilde f = \tilde g \overline{\tilde g}$.
Then we denote by 
$ \beta_{k}$ the Fourier coefficient of order $k$ $g^{-1}$ 
and by
$ \tilde \beta_{k} $ the one of 
${\tilde g}^{-1}$. 
Without loss of generality 
we assume $\beta_{0}=1$. 
Lastly for all real $\alpha$ in $]-\frac{1}{2}, \frac{1}{2}[$ we put 
$\beta_{u}^{(\alpha)} = \widehat{(1-\chi)^{-\alpha}}$. 
\subsection {Orthogonal polynomials}
\begin{theorem} \label{COEF}
Assume that for all $j \in \{1, \cdots, M \}$ we have 
$\theta_j \in ]0, 2\pi[$, $\theta_{j}\neq \theta_{j'}$ if $j\neq j'$ and 
$-\frac{1}{2} <\alpha_{M}\le\cdots \le\alpha_{j}\le \cdots \alpha_{1}<\frac{1}{2}.$  
Let $m$, $1\le m \le M$, such that  $\alpha_{j}=\alpha_{1}$ for all $j$, $1\le j \le m$.Then 
for all integer $k$, $\frac{k}{N} \rightarrow x$, $0<x<1$,  we have the 
asymptotic 
\begin{align*}
&\left( T_{N}^{-1}\left( \prod_{1\le j \le M} 
     \vert \chi \overline{ \chi_{j}}-1\vert ^{2\alpha_{j}} c\right) 
\right)_{k+1,1}
=\\ 
=& \frac {k^{\alpha_{1}-1}}{\Gamma (\alpha_{1})} (1-\frac{k}{N})^{\alpha_{1}} 
 \sum_{j=1}^m K_{j} \overline{\chi_{j} ^k} c_{1}^{-1} (\chi_{j}) + o(k^{\alpha_{1}-1})
\end{align*}
uniformly in $k$ for $x \in [\delta_{0},\delta _{1}]$, $ 0<\delta_{0}<\delta _{1}<1,$
and with $K_{j} =\displaystyle{ \prod_{h=1}^M} ( 1- \overline{\chi_{h}} \chi_{j})^{-\alpha_{h}}$ .
\end{theorem}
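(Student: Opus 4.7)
The strategy is to reduce the multi-singularity problem to the single-singularity case already handled by Theorem~\ref{PREDIZERO} by combining the Wiener--Hopf factorization $f = g\overline{g}$ with a convolution analysis of the Fourier coefficients of $g^{-1}$, and then plugging the resulting asymptotic into the inversion scheme of \cite{RS10}.

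The first step is to extract the leading behaviour of $\beta_k = \widehat{g^{-1}}(k)$ from the explicit factorization $g^{-1}(\chi) = c_1^{-1}(\chi)\prod_{j=1}^M (1-\overline{\chi_j}\chi)^{-\alpha_j}$. Each factor $(1-\overline{\chi_j}\chi)^{-\alpha_j}$ is an analytic series whose Fourier coefficient of order $n$ is $\frac{\Gamma(\alpha_j+n)}{\Gamma(\alpha_j)\, n!}\,\overline{\chi_j^n} \sim \frac{n^{\alpha_j-1}}{\Gamma(\alpha_j)}\,\overline{\chi_j^n}$, while $c_1^{-1} = Q/P$ is rational and outer and has therefore exponentially decaying Fourier coefficients. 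The convolution giving $\beta_k$ is then dominated by those configurations in which a single factor indexed by some $j^{*}$ carries an index close to $k$ and the remaining ones stay bounded; summing the resulting geometric tails produces
\[
\beta_k = \sum_{j=1}^{M} \frac{k^{\alpha_j-1}}{\Gamma(\alpha_j)}\, K_j\, c_1^{-1}(\chi_j)\,\overline{\chi_j^k} + o(k^{\alpha_1 - 1}),
\]
the constant $K_j$ arising as the value at $\chi = \chi_j$ of the smooth function $c_1^{-1}\prod_{h\neq j}(1-\overline{\chi_h}\chi)^{-\alpha_h}$; keeping only the maximal exponent $\alpha_j = \alpha_1$ reduces the sum to $j = 1,\dots,m$.

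The second step is to convert this asymptotic of $\beta_k$ into the announced asymptotic of $(T_N^{-1}(f))_{k+1,1}$ by following the inversion scheme of \cite{RS10} that underlies Theorem~\ref{PREDIZERO}. That scheme writes the inverse Toeplitz entry as a bulk contribution of the form $\sum_{l=0}^{N-k}\beta_{k+l}\overline{\beta_l}$ plus a boundary correction encoding the truncation at level $N$; for a symbol with a single Fisher--Hartwig singularity of exponent $\alpha$, the combination yields precisely the factor $(1-k/N)^{\alpha}$ of Theorem~\ref{PREDIZERO}. Inserting the multi-singularity expansion of $\beta_k$, the ``diagonal'' contribution coming from a single $\chi_j$ reproduces this mechanism and gives the $(1-k/N)^{\alpha_1}$ factor for each $j$ with $1\le j \le m$.

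The main obstacle is controlling the interference between distinct singularities. Firstly, one must show that the convolution configurations in which two or more of the $n_j$ are simultaneously large contribute only $o(k^{\alpha_1-1})$; this should follow from an estimate of the form $\sum_{n \le k/2} n^{\alpha_j - 1}(k-n)^{\alpha_h - 1} = O(k^{\max(\alpha_j,\alpha_h) - 1})$ together with the rapid decay of the Fourier coefficients of $c_1^{-1}$. Secondly, and more delicately, the cross terms $\beta_{k+l}\overline{\beta_l}$ mixing contributions from two singular points $\chi_j \neq \chi_{j'}$ carry a factor $(\overline{\chi_j}\chi_{j'})^l$, which oscillates in $l$; one has to check that the resulting oscillatory sums are of strictly lower order than the diagonal $j=j'$ ones. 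Once these cancellations are in place, summing the individual contributions of the singular points with $\alpha_j = \alpha_1$ produces exactly the formula of Theorem~\ref{COEF}.
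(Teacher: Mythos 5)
Your first step reproduces the content of Property~\ref{PROP1} and Lemma~\ref{PRELI}: the asymptotic $\beta_k \sim \Gamma(\alpha_1)^{-1} k^{\alpha_1-1}\sum_{j\le m} K_j\, c_1^{-1}(\chi_j)\,\overline{\chi_j}^k$, including the identification of the constants as values of the regular part of $g^{-1}$ at the singular points, is correct and matches the paper. The genuine gap is in the second step. The paper's inversion formula is not a Gohberg--Semencul type sum $\sum_l \beta_{k+l}\overline{\beta_l}$ with a boundary correction; it is the Hankel-operator representation of Lemma~\ref{INVERS} and Corollary~\ref{INVERS2}, whose correction term is a Neumann series in $H^*_{\Phi_N}H_{\Phi_N}$ with $\Phi_N=(g/\bar g)\chi^{N+1}$. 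Making that series usable requires a second, independent asymptotic estimate, namely Property~\ref{prop2} for $\gamma_k=\widehat{g/\bar g}(k)$, which your proposal never addresses: these coefficients decay like $k^{-1}\sum_j(\sin\pi\alpha_j/\pi)H_j\,\overline{\chi_j}^k$, and resumming the Neumann series leads to the expression of Lemma~\ref{INVERS3}, a convolution $\sum_u \beta_{k-u}\sum_j F_{N,\alpha_j}(u/N)\,\overline{\chi_j}^u$ involving the functions $F_{N,\alpha}$ of \cite{RS10}.

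The core technical step is then Lemma~\ref{final}, which shows by Abel summation that in this convolution only the resonant pairings (the singular point carried by $\beta_{k-u}$ matching the one modulating $F_{N,\alpha_j}$) survive at order $k^{\alpha_1-1}$; this factorizes the whole correction into a scalar $\sum_{j\le m}K_j\,c_1^{-1}(\chi_j)\,\overline{\chi_j}^k$ times the analogous sum for the reference symbol $\vert 1-\chi\vert^{2\alpha_1}$, and only at that point is Theorem~\ref{PREDIZERO} invoked to produce the factor $(1-k/N)^{\alpha_1}$. You correctly name the obstacle (oscillatory cross-terms between distinct singular points) but you assert rather than prove their cancellation, and the $(1-k/N)^{\alpha_1}$ factor is declared rather than derived. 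Without the $\gamma_k$ estimate, the resulting $F_{N,\alpha_j}$ structure, and the Abel-summation argument of Lemma~\ref{final}, the second half of the argument is missing precisely where the difficulty lies.
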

Then the  following statement is an  obvious consequence of Theorems \ref{COEF}.
\begin{corollary}\label{GEGEN}
Let $\chi_{0}$ be $e^{i\theta_{0}}$ with $\theta_{0}\in ]0, + \pi[$. 
With the same hypotheses as in Theorem \ref{COEF} we have 
\begin{align*}
&\left( T_{N}^{-1}\left(\vert \chi\overline{\chi_{0}}-1Ê\vert ^{2\alpha}\vert \chi\chi_{0}-1Ê\vert ^{2\alpha} c\right)
\right)_{k+1,1}=\\
&=\frac{K_{\alpha,\theta_{0},c_1}}{\Gamma(\alpha)} \cos \left(k\theta_{0}+\omega_{\alpha,\theta_0} \right) 
 k^{\alpha-1} (1-\frac{k}{N}) ^\alpha + o(k^{\alpha-1})
\end{align*}
uniformly in $k$ for $x \in [\delta_{0},\delta _{1}]$ $ 0<\delta_{0}<\delta _{1}<1$, 
and with 
$$\omega_{\alpha,\theta_0} = \alpha\frac{\pi}{2} -\alpha\theta_{0}- \arg \left(c_{1}(\chi_{0})\right),
\quad  K_{\alpha,\theta_{0},c_1} = 2^{-\alpha+1} (\sin \theta_{0})^{-\alpha} \sqrt {c_{1}^{-1} (\chi_{0})}.$$
\end{corollary}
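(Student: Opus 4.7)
The plan is to specialize Theorem \ref{COEF} to the case $M=2$ with $\alpha_1 = \alpha_2 = \alpha$, $\chi_1 := \chi_0 = e^{i\theta_0}$ and $\chi_2 := \overline{\chi_0} = e^{-i\theta_0}$ (so that $\theta_2 = 2\pi - \theta_0 \in \, ]0, 2\pi[$). Since both exponents equal the top one, the integer $m$ in Theorem \ref{COEF} equals $2$, and the asymptotic formula reduces to a sum of exactly two terms, which I will combine into a single real cosine.

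The first step is to compute the constants $K_1$ and $K_2$ explicitly. Using
$$1 - e^{\pm 2i\theta_0} = \mp 2i \sin(\theta_0)\, e^{\pm i\theta_0} = 2\sin\theta_0\, e^{\pm i(\theta_0 - \pi/2)},$$
valid on the principal branch because $\theta_0 \in \,]0,\pi[$, one finds
$$K_1 = (2\sin\theta_0)^{-\alpha}\, e^{-i\alpha(\theta_0 - \pi/2)}, \qquad K_2 = (2\sin\theta_0)^{-\alpha}\, e^{i\alpha(\theta_0 - \pi/2)} = \overline{K_1}.$$

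The second step exploits the reality assumption $P, Q \in \mathbb{R}[X]$: this forces $c_1 = P/Q$ to have real Taylor coefficients, hence $c_1(\overline z) = \overline{c_1(z)}$ on the unit circle, and in particular $c_1^{-1}(\chi_2) = \overline{c_1^{-1}(\chi_0)}$. Since also $\overline{\chi_2^{\,k}} = e^{ik\theta_0}$ is the complex conjugate of $\overline{\chi_1^{\,k}} = e^{-ik\theta_0}$, the two summands $K_j\, \overline{\chi_j^{\,k}}\, c_1^{-1}(\chi_j)$ for $j=1,2$ are complex conjugates of each other, so their sum equals $2\,\Re\bigl(K_1\, e^{-ik\theta_0}\, c_1^{-1}(\chi_0)\bigr)$.

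Finally I write $c_1^{-1}(\chi_0)$ in polar form as $|c_1^{-1}(\chi_0)|\, e^{-i\arg c_1(\chi_0)}$, noting that $|c_1^{-1}(\chi_0)| = \sqrt{c^{-1}(\chi_0)}$ since $c = |c_1|^2$. Collecting phases yields total argument $\alpha(\pi/2 - \theta_0) - \arg c_1(\chi_0) - k\theta_0 = \omega_{\alpha,\theta_0} - k\theta_0$, and the parity of cosine then delivers the constant $2(2\sin\theta_0)^{-\alpha}\sqrt{c^{-1}(\chi_0)} = 2^{-\alpha+1}(\sin\theta_0)^{-\alpha}\sqrt{c^{-1}(\chi_0)}$ together with the factor $\cos(k\theta_0 + \omega_{\alpha,\theta_0})$ of the statement. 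There is no genuine obstacle here: once Theorem \ref{COEF} is granted, the corollary is a bookkeeping exercise reflecting the symmetry of the conjugate pair of singularities at $\chi_0$ and $\overline{\chi_0}$.
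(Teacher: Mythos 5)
Your approach is exactly the one the paper has in mind: the paper labels this corollary an ``obvious consequence'' of Theorem~\ref{COEF} and gives no separate argument, and your specialization ($M=2$, $\alpha_1=\alpha_2=\alpha$, conjugate pair $\chi_1=\chi_0$, $\chi_2=\overline{\chi_0}$, $m=2$), your computation of $K_1=\overline{K_2}=(2\sin\theta_0)^{-\alpha}e^{-i\alpha(\theta_0-\pi/2)}$, and the pairing of the two summands as complex conjugates are all correct. You are also right to read the constant as involving $|c_1^{-1}(\chi_0)|=\sqrt{c^{-1}(\chi_0)}$ rather than the paper's literal $\sqrt{c_1^{-1}(\chi_0)}$, which is a typo.

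However, there is a sign slip in your last sentence. You (correctly) compute the total argument of the $j=1$ summand to be $\omega_{\alpha,\theta_0}-k\theta_0$, and then invoke ``parity of cosine'' to produce $\cos(k\theta_0+\omega_{\alpha,\theta_0})$. Parity actually gives
\begin{equation*}
\cos\bigl(\omega_{\alpha,\theta_0}-k\theta_0\bigr)=\cos\bigl(k\theta_0-\omega_{\alpha,\theta_0}\bigr),
\end{equation*}
which is \emph{not} $\cos(k\theta_0+\omega_{\alpha,\theta_0})$ unless $\omega_{\alpha,\theta_0}$ is a multiple of $\pi$. Your own computation therefore yields $\cos(k\theta_0-\omega_{\alpha,\theta_0})$, and the sign in the paper's stated $\omega_{\alpha,\theta_0}$ should apparently be reversed (equivalently, the phase should be $-\alpha\frac{\pi}{2}+\alpha\theta_0+\arg c_1(\chi_0)$). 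This does not invalidate the argument --- the derivation is sound up to that point --- but the claim that parity closes the gap between your computed phase and the paper's displayed one is false as written, and you should either flag the discrepancy or correct the sign.
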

We can also point out the asymptotic of the coefficients of order $k$ of the predictor polynomial 
when $\frac{k}{N} \rightarrow 0$. 
\begin{corollary} \label{COEF2}
With the same hypotheses as in Theorem \ref{COEF} we have, if 
$\displaystyle{ \frac{k}{N} \rightarrow 0}$ when $N$ goes to the infinity 
$$ 
\left( T_{N}^{-1}\left( \prod_{1\le j \le M} 
     \vert \chi \overline{ \chi_{j}}-1 \vert ^{2\alpha_{j}} c\right) 
\right)_{k+1,1}
=  \beta_{k} +O(\frac{1}{N}).
$$
\end{corollary}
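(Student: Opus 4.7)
\textbf{Plan for the proof of Corollary \ref{COEF2}.} The idea is that, in the regime $k/N\to 0$, the entry $(T_N^{-1}(f))_{k+1,1}$ should converge to its ``infinite volume'' counterpart, namely the $k$-th Fourier coefficient $\beta_k$ of $g^{-1}$, where $f=g\bar g$ is the Wiener--Hopf factorization already introduced. The only work is to quantify this convergence and obtain the rate $O(1/N)$.

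First, I would exploit the factorization $f=g\bar g$ with $g\in H^{2+}(\mathbb T)$ and $g^{-1}\in H^{2+}(\mathbb T)$ (the latter holds because $-\demi<\alpha_j<\demi$ and $c_1$ is a rational function without zeros or poles on the circle). A Gohberg--Semencul-type identity then writes
\begin{equation*}
(T_N^{-1}(f))_{k+1,1}=\beta_k+R_{k,N},
\end{equation*}
where $R_{k,N}$ is a boundary correction accounting for the truncation at level $N$. Schematically, $R_{k,N}$ is a convolution between the Fourier coefficients of $g^{-1}$ of order $N+1-j$ and certain ``right-boundary'' quantities of $T_N^{-1}(f)$ that have already been analyzed in the proof of Theorem \ref{COEF} near $x=1$.

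Second, I would estimate $R_{k,N}$. Under $k/N\to 0$, the shift by $N+1$ pushes the relevant Fourier coefficients of $g^{-1}$ into their decaying regime, where they are of size $O((N-k)^{\alpha_1-1})=O(N^{\alpha_1-1})$, while the right-boundary quantities are $O(N^{\alpha_M-1})$ by the Fisher--Hartwig extension of Baxter's theorem developed in \cite{RS10} (and already used in Theorem \ref{COEF}). Combining these bounds with the summability provided by the regularity of $c_1$ gives $R_{k,N}=O(1/N)$, uniformly for $k$ with $k/N\to 0$.

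The main obstacle is producing the sharp $O(1/N)$ rate, as opposed to the weaker $o(1)$ that would follow from a general Szegő-type argument. This requires carefully separating, via the splitting $g=\tilde g\, c_1$, the purely singular part $\tilde g$ (where the local asymptotics of \cite{RS10} apply at each point $\chi_j$) from the smooth rational part $c_1$ (whose Fourier coefficients decay exponentially and contribute a negligible term). Once this split is performed and the singular contributions at each $\chi_j$ are summed, the proof reduces to a direct assembly of the estimates already available in the proof of Theorem \ref{COEF} and in \cite{RS10}.
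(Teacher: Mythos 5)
The paper does \emph{not} prove Corollary~\ref{COEF2} by the route you sketch. It proves it in Remark~\ref{REMARQUE2}, as a one-line consequence of Lemma~\ref{INVERS3}: that lemma already writes
\[
\left(T_{N}^{-1}(f)\right)_{k+1,1}=\beta_{k}-\frac{1}{N}\sum_{u=0}^{k}\beta_{k-u}\Bigl(\sum_{j=1}^{M}F_{N,\alpha_{j}}\bigl(\tfrac{u}{N}\bigr)\,\overline{\chi_{j}}^{\,u}\Bigr)+R_{N,\alpha_{1}},
\]
so the factor $1/N$ is \emph{explicitly built into} the correction; the only remaining observation for $k/N\to 0$ is that $u/N\to 0$ uniformly for $0\le u\le k$, hence by continuity $F_{N,\alpha_{j}}(u/N)\to F_{N,\alpha_{j}}(0)=\alpha_{j}^{2}+o(1)$, and the bounded sum then yields $O(1/N)$. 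The heavy lifting is entirely inside Lemma~\ref{INVERS3}, which repackages the Neumann series for the Hankel operators in the inversion formula (Lemma~\ref{INVERS}, Corollary~\ref{INVERS2}) into the compact form $\tfrac{1}{N}F_{N,\alpha}(\cdot)$ with a function $F_{N,\alpha}$ that is uniformly bounded and $C^{1}$ away from $1$.

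Your sketch replaces this with a Gohberg--Semencul / Baxter-type argument and then tries to deduce $O(1/N)$ from raw decay rates: Fourier coefficients of $g^{-1}$ shifted by $N+1$ of size $O(N^{\alpha_{1}-1})$, convolved against ``right-boundary'' quantities of size $O(N^{\alpha_{M}-1})$. This is where the gap is. The pointwise product $N^{\alpha_{1}-1}\cdot N^{\alpha_{M}-1}=N^{\alpha_{1}+\alpha_{M}-2}$ is indeed $O(1/N)$ since each $\alpha_{j}<\tfrac12$, but the correction is a \emph{sum} of order $N$ such products (indeed, in the paper's formula~(\ref{STAR}) the correction is $\sum_{u=0}^{k}\beta_{k-u}H_{N}(u)$, and $H_{N}(u)$ is itself a multi-series in $\gamma$'s with ranges of length $\sim N$). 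Without the structural collapse into $\tfrac1N F_{N,\alpha}$ this sum could cost an extra factor of $N$, giving only $O(N^{\alpha_{1}+\alpha_{M}-1})$, which for $\alpha_{1}+\alpha_{M}>0$ is $o(1)$ but not $O(1/N)$. The summability of $c_{1}$'s Fourier coefficients does not rescue this: the slow decay comes from the singular factors, not from $c_{1}$. Also note that the correction in~(\ref{STAR}) is a convolution against the $\gamma_{u}=\widehat{\,g/\bar g\,}(u)$, not against the $\beta$'s of order $N+1-j$ as you write, so the objects you propose to estimate are not the ones that actually appear.

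In short: the idea that the finite-$N$ entry converges to $\beta_{k}$ is correct and is what the paper says, but the $O(1/N)$ rate cannot be reached by generic decay bookkeeping here; it requires precisely the representation of Lemma~\ref{INVERS3} (with its explicit $1/N$ and the value $F_{N,\alpha}(0)=\alpha^{2}+o(1)$), which your proposal does not use or reproduce.
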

\section{Inversion formula}
\subsection{Definitions and notations}
Let $H^{2+}(\mathbb T)$ and 
$H^{2-}(\mathbb T)$ the two subspaces of $L^2(\mathbb T)$ defined by 
$H^{2+} (\mathbb T)=\{ h \in L^2(\mathbb T)\vert u<0 \implies \hat h (u) =0 \}$ and
$H^{2-} (\mathbb T)=\{ h \in L^2(\mathbb T)\vert u\ge0 \implies \hat h (u) =0 \}$.
We denote by $\pi_{+}$ the orthogonal projector on $H^{2+}(\mathbb T)$ 
and $\pi_{-} $ the orthogonal projector on $H^{2-}(\mathbb T)$.
 It is known (see \cite{GS}) that if $f\ge 0$ and $\ln f \in L^1(\mathbb T)$ we have 
$ f = g \bar g$ with $g\in H^{2+} (\mathbb T)$. Put $\Phi_{N}= \frac{g}{\bar g} \chi^{N+1}$. Let $H_{\Phi_{N}}$ and 
$H^*_{\Phi_{N}}$ be the two Hankel operators defined respectively on $H^{2+}$ and 
$H^{2-}$ by 
$$ H_{\Phi_{N}} \, : \quad H^{2+}(\mathbb T)\rightarrow H^{2-}(\mathbb T), 
\quad \quad H_{\Phi_{N}} (\psi )= \pi_{-}( \Phi_{N}\psi),$$
 and 
 $$H^*_{\Phi_{N}}\, :  \quad  H^{2-}(\mathbb T)\rightarrow 
H^{2+}(\mathbb T), \quad \quad H^*_{\Phi_{N}} (\psi )= \pi_{+}( \bar\Phi_{N}\psi).$$
\subsection{A generalised inversion formula}
 We have stated in \cite{RS10} for a precise class of non regular functions which contains  
 $\displaystyle{\prod _{1\le j \le M} \vert \chi\bar \chi_{j}-1\vert ^{2\alpha_{j}}c}$  the following lemma (see the appendix of \cite{RS10} for the demonstration),  
\begin{lemma}\label{INVERS}
Let $f$ be an almost everywhere positive function on the torus $\mathbb T$ with
$ \ln f$, $f$, and $\frac{1}{f}$ are in $\mathbb L^1(\mathbb T)$. Then $f=g \bar g$ with 
$g\in H^{2+}(\mathbb T)$. For all trigonometric polynomials $P$ of degree at most $N$, 
we define $G_{N,f}(P)$ by 
$$ G_{N,f}(P) = \frac{1}{g} \pi_{+} \left( \frac{P}{\bar g} \right)-
\frac{1}{g} \pi_{+} \left( \Phi_{N}\sum_{s=0}^\infty \left( H^*_{\Phi_{N}} H_{\Phi_{N}} \right)^s 
\pi_{+} \bar \Phi_{N}\pi_{+}\left( \frac{P}{\bar g}\right) \right).$$
For all $P$ we have 
\begin{itemize}
\item The serie $\displaystyle{\sum_{s=0}^\infty \left( H^*_{\Phi_{N}} H_{\Phi_{N}} \right)^s 
\pi_{+} \bar \Phi_{N}\pi_{+}\left( \frac{P}{\bar g}\right)}$ converges in $L^2(\mathbb T)$.
\item
$\det \left(T_{N}(f)\right) \neq 0$ and 
$$ \left( T_{N}(f)\right)^{-1} (P) =G_{N,f}(P).$$
\end{itemize}
\end{lemma}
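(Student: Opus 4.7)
The plan is to verify both assertions by an explicit Wiener--Hopf style construction, exploiting the factorisation $f = g\bar g$ with $g$ outer in $H^{2+}(\mathbb T)$, the unimodular shift $\Phi_{N} = (g/\bar g)\chi^{N+1}$, and the identity $P_{N} = \pi_{+} - \chi^{N+1}\pi_{+}\bar\chi^{N+1}$ for the orthogonal projector onto $E_{N} = \Span\{1, \chi, \ldots, \chi^{N}\}$. Once this identity is in hand, one has $T_{N}(f)Q = \pi_{+}(fQ) - \chi^{N+1}\pi_{+}(\bar\chi^{N+1}fQ)$ for any $Q$ in $L^{2}(\mathbb T)$ making the expressions meaningful, so the inversion reduces to careful bookkeeping of Fourier frequencies.

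First I would deal with the ``infinite'' part. The candidate $Q_{0} = g^{-1}\pi_{+}(P/\bar g)$ already satisfies $T_{N}(f)Q_{0} = P$: writing $u = \pi_{+}(P/\bar g)$, one has $\pi_{+}(fQ_{0}) = \pi_{+}(\bar g u) = P$ because $\bar g\,\pi_{-}(P/\bar g) \in H^{2-}$ and $P \in E_{N}$; and $\pi_{+}(\bar\chi^{N+1}\bar g u) = 0$ by a pure frequency argument, since $\bar\chi^{N+1}\bar g$ has frequencies $\le -(N+1)$ and $P \in E_{N}$ forces the complementary piece to have frequencies $\le -(N+2)$. Since $Q_{0} \notin E_{N}$ in general, one must subtract a correction.

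Then I would look for a correction of the form $\Delta = g^{-1}\pi_{+}(\Phi_{N}S)$ with $S \in H^{2+}(\mathbb T)$ to be chosen. The same frequency computations show $P_{N}(f\Delta) = 0$ for any such $S$, so $T_{N}(f)(Q_{0} - \Delta) = P$ is preserved. The role of $S$ is purely to push $Q_{0} - \Delta$ into $E_{N}$. Using $g$ outer and the characterisation $Q \in E_{N} \iff Q \in H^{2+}(\mathbb T)$ and $\pi_{+}(\bar\chi^{N+1}\bar g Q) = 0$, this condition becomes one operator equation on $S$. Substituting $\bar g/g = \chi^{N+1}\bar\Phi_{N}$, decomposing $\pi_{+}(\Phi_{N}S) = \Phi_{N}S - H_{\Phi_{N}}S$, using $|\Phi_{N}| = 1$ and the identity $H^{*}_{\Phi_{N}}\psi = \pi_{+}(\bar\Phi_{N}\psi)$ for $\psi \in H^{2-}$, the condition reduces to
\begin{equation*}
(I - H^{*}_{\Phi_{N}}H_{\Phi_{N}}) S = \pi_{+}\bar\Phi_{N}\pi_{+}(P/\bar g),
\end{equation*}
whose formal solution is precisely the Neumann series in the statement.

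The main obstacle is justifying this Neumann series, i.e.\ the $L^{2}(\mathbb T)$ convergence claim. The Hankel operator $H_{\Phi_{N}}$ has unimodular symbol, so $\|H_{\Phi_{N}}\| \le 1$ automatically, and one must upgrade this to invertibility of $I - H^{*}_{\Phi_{N}}H_{\Phi_{N}}$ on $H^{2+}(\mathbb T)$ together with convergence of the Neumann iterates applied to the particular vector $\pi_{+}\bar\Phi_{N}\pi_{+}(P/\bar g)$. The hypotheses $\ln f, f, 1/f \in L^{1}(\mathbb T)$ make the Szeg\H{o} factorisation $f = g\bar g$ valid with $g, 1/g$ outer, while the strict positivity of $f$ yields $\det T_{N}(f) \neq 0$ immediately; these ingredients, together with the smoothness encoded in the regular factor, are what allow one to bound the Hankel norms and run the geometric series. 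Once convergence is secured, the construction above gives $G_{N,f}(P) \in E_{N}$ and $T_{N}(f)G_{N,f}(P) = P$, and hence $G_{N,f}(P) = T_{N}(f)^{-1}P$.
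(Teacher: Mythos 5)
The paper does not actually prove Lemma \ref{INVERS}: the text surrounding the statement explicitly defers the demonstration to the appendix of \cite{RS10}, so there is no in-paper argument to compare your proposal against. Your outline therefore has to stand on its own.

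The formal Wiener--Hopf part of your plan is sound and is, structurally, the standard derivation that the notation of the lemma is transparently built around: use $f = g\bar g$, take $Q_{0} = g^{-1}\pi_{+}(P/\bar g)$ as the "infinite-section" preimage, verify $P_{N}(fQ_{0}) = P$ by frequency bookkeeping, note that the homogeneous corrections $\Delta = g^{-1}\pi_{+}(\Phi_{N}S)$ with $S \in H^{2+}$ leave $P_{N}(f\,\cdot)$ unchanged, and then choose $S$ to push $Q_{0}-\Delta$ into $E_{N}$, which produces the equation $(I - H^{*}_{\Phi_{N}}H_{\Phi_{N}})S = \pi_{+}\bar\Phi_{N}\pi_{+}(P/\bar g)$. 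All of that is plausible, and it is almost certainly the same skeleton as in \cite{RS10}.

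The genuine gap is exactly where you wave your hands: the first bullet of the lemma, the $L^{2}$ convergence of the Neumann series. You write that $\|H_{\Phi_{N}}\| \le 1$ "automatically" and that the Szeg\H{o} factorisation hypotheses plus "the smoothness encoded in the regular factor" let you "bound the Hankel norms and run the geometric series." This does not work under the hypotheses of the lemma, and it also does not work for the symbols the lemma is actually used for. Under $\ln f, f, 1/f \in L^{1}$ alone, $\Phi_{N} = (g/\bar g)\chi^{N+1}$ is merely a unimodular function; Nehari's theorem gives $\|H_{\Phi_{N}}\| = \mathrm{dist}_{L^{\infty}}(\Phi_{N}, H^{\infty}) \le 1$, but there is no reason for this distance to be strictly less than $1$. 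For the Fisher--Hartwig symbols $\prod_{j}|1-\chi\bar\chi_{j}|^{2\alpha_{j}}c$ that the paper cares about, $g/\bar g$ has jump discontinuities at the $\chi_{j}$, $\Phi_{N} \notin C + H^{\infty}$, and one expects $\|H_{\Phi_{N}}\| = 1$ exactly. In that situation the geometric series does \emph{not} converge in operator norm, and the invertibility of $I - H^{*}_{\Phi_{N}}H_{\Phi_{N}}$ on $H^{2+}$ (even granting it, which itself needs a separate argument via the positivity of $T_{N}(f)$) is not equivalent to the convergence of $\sum_{s}(H^{*}_{\Phi_{N}}H_{\Phi_{N}})^{s}$ applied to the particular vector $\pi_{+}\bar\Phi_{N}\pi_{+}(P/\bar g)$. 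Establishing that convergence is the actual content of the first bullet, and it requires an argument tailored to the vector in question (or to a restricted class of symbols, which is what the paper's preamble to the lemma alludes to when it speaks of "a precise class of non regular functions"); it does not follow from a norm estimate on the Hankel operator. As it stands your proposal proves the second bullet conditionally on the first, but leaves the first essentially unproved.

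A secondary, more minor point: in your frequency-support argument for the characterisation $Q \in E_{N} \iff Q \in H^{2+} \text{ and } \pi_{+}(\bar\chi^{N+1}\bar g Q) = 0$, you are multiplying two $L^{2}$ functions and reading off the spectrum of an $L^{1}$ product; passing back from "$\bar g Q$ has frequencies $\le N$" to "$Q$ has frequencies $\le N$" uses the outer factorisation $1/\bar g = \overline{1/g}$ and needs $1/g \in H^{2+}$, which is supplied by $1/f \in L^{1}$, but the justification should be made explicit rather than left implicit. This is repairable; the convergence of the Neumann series is the real missing piece.
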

An obvious corollary of Lemma \ref{INVERS} is 
\begin{corollary} \label{INVERS2}
With the hypotheses of Lemma \ref{INVERS} we have 
$$ \left( T_{N}(f)\right)^{-1} _{l+1,k+1}=
\Bigl \langle \pi_{+}\left( \frac{\chi^k}{\bar g}\right)\Big \vert \left( \frac{\chi^l}{\bar g}\right)\Bigr \rangle
- \Bigl \langle \sum_{s=0}^\infty \left( H^*_{\Phi_{N}} H_{\Phi_{N}} \right)^s 
\pi_{+} \bar \Phi_{N}\pi_{+}\left( \frac{\chi^k}{\bar g}\right) \Big \vert \bar \Phi_{N} 
\left( \frac{\chi^l}{\bar g}\right) \Bigr \rangle.
$$
\end{corollary}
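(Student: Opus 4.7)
The plan is to apply Lemma~\ref{INVERS} to the monomial $P = \chi^k$ and read off the $(l+1,k+1)$-entry of $T_N(f)^{-1}$ as the $l$-th Fourier coefficient. I identify column vectors in $\mathbb{C}^{N+1}$ with trigonometric polynomials of degree at most $N$ via $e_{k+1}\leftrightarrow\chi^k$, so that the $(l+1)$-th component of a polynomial $R$ is $\langle R\,|\,\chi^l\rangle$ in $L^2(\mathbb T)$. Lemma~\ref{INVERS} then gives
$$\left(T_N(f)\right)^{-1}_{l+1,k+1} \;=\; \langle G_{N,f}(\chi^k)\,|\,\chi^l\rangle.$$

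I would then substitute the two-term formula for $G_{N,f}(\chi^k)$ and push the multipliers $1/g$ and $\Phi_N$ across the pairing by the adjoint identity $\langle u\varphi\,|\,\psi\rangle = \langle\varphi\,|\,\bar u\psi\rangle$: with $u = 1/g$ the outer $\chi^l$ becomes $\chi^l/\bar g$ in both terms, and a second application with $u = \Phi_N$ in the series term (valid because $|\Phi_N|=1$ on $\mathbb T$) produces $\bar\Phi_N$ on the right. The outer projector $\pi_+$ inside $G_{N,f}$ is absorbed using its self-adjointness together with the fact that both $\pi_+(\chi^k/\bar g)$ and the series
$$Q = \sum_{s\geq 0}(H^*_{\Phi_N}H_{\Phi_N})^s\,\pi_+\,\bar\Phi_N\,\pi_+(\chi^k/\bar g)$$
belong to $H^{2+}$ (each operator $H^*_{\Phi_N}H_{\Phi_N}$ preserves $H^{2+}$); by orthogonality of $H^{2+}$ and $H^{2-}$, pairing against an $H^{2+}$ function only sees the $H^{2+}$-component of the other factor.

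The only subtle step is matching the projections in the second term: a direct computation naturally yields $\bar\Phi_N\pi_+(\chi^l/\bar g)$ in the right-hand slot of the pairing, whereas the statement writes $\bar\Phi_N(\chi^l/\bar g)$. I would expect to reconcile the two by showing that $\bar\Phi_N\pi_-(\chi^l/\bar g)$ pairs to zero against $Q\in H^{2+}$, which uses the explicit form $\bar\Phi_N = \bar g\chi^{-N-1}/g$ together with the factorization $f = g\bar g$. Apart from this purely algebraic identity, no analytic estimate is required, which confirms the corollary's status as an immediate consequence of Lemma~\ref{INVERS}.
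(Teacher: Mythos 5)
Your plan is sound: Lemma \ref{INVERS} applied to $P=\chi^k$ gives $\left(T_N(f)\right)^{-1}_{l+1,k+1}=\langle G_{N,f}(\chi^k)\mid\chi^l\rangle$, and moving $1/g$ across the pairing turns $\chi^l$ into $\chi^l/\bar g$ in both terms. Pushing the outer $\pi_+$ across by self-adjointness and then transferring $\Phi_N$ (unitary since $|\Phi_N|=1$) gives, exactly as you say,
$$\left(T_N(f)\right)^{-1}_{l+1,k+1}=\Bigl\langle\pi_+\bigl(\tfrac{\chi^k}{\bar g}\bigr)\,\Big|\,\tfrac{\chi^l}{\bar g}\Bigr\rangle-\Bigl\langle Q\,\Big|\,\bar\Phi_N\,\pi_+\bigl(\tfrac{\chi^l}{\bar g}\bigr)\Bigr\rangle,\qquad Q=\sum_{s\ge0}(H^*_{\Phi_N}H_{\Phi_N})^s\pi_+\bar\Phi_N\pi_+\bigl(\tfrac{\chi^k}{\bar g}\bigr).$$
The gap is in the reconciliation step you defer: the quantity $\langle Q\mid\bar\Phi_N\pi_-(\chi^l/\bar g)\rangle$ does \emph{not} vanish in general. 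Transferring $\bar\Phi_N$ back across gives $\langle Q\mid\bar\Phi_N\pi_-(\chi^l/\bar g)\rangle=\langle\Phi_N Q\mid\pi_-(\chi^l/\bar g)\rangle=\langle H_{\Phi_N}Q\mid\pi_-(\chi^l/\bar g)\rangle$, and there is no reason for $H_{\Phi_N}Q$ to be orthogonal to $\pi_-(\chi^l/\bar g)$. A concrete check: take $g=1+a\chi$ with $0<|a|<1$, $N=0$, $k=l=0$. Then $T_0(f)^{-1}_{1,1}=1/(1+|a|^2)$; one computes $Q_0=\pi_+\bar\Phi_0\pi_+(1/\bar g)=\frac{-a(1-|a|^2)}{1+a\chi}$, $H^*_{\Phi_0}H_{\Phi_0}Q_0=|a|^4 Q_0$, hence $Q=\frac{-a}{(1+|a|^2)(1+a\chi)}$. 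With the projector one gets $1-\langle Q\mid\bar\Phi_0\pi_+(1/\bar g)\rangle=1-\frac{|a|^2}{1+|a|^2}=\frac{1}{1+|a|^2}$, which is correct; without it one gets $1-\langle Q\mid\bar\Phi_0(1/\bar g)\rangle=1-\frac{|a|^2}{1-|a|^4}$, which is wrong. So the version you derived, with $\bar\Phi_N\pi_+(\chi^l/\bar g)$ in the right-hand slot, is the correct statement (consistent with the printed left-hand slot, which also carries the projector), and the corollary as printed is missing a $\pi_+$. Your proof is complete once you stop trying to remove that projector; the ``reconciliation'' you anticipated is the one step that genuinely fails.
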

Lastly if $\gamma_{u}= \widehat { \frac{g}{\overline{ g}}}(u)$ we obtain as in \cite{RS10} the formal result 
\begin{align*}
\left( H_{\Phi_{N}}^* H_{\Phi_{N}}\right)^m \pi_{+}\bar  \Phi_{N} \pi_{+}\left( \frac{\chi^k}{\bar g}\right)
&=
\sum_{u=0}^k \overline{\beta_{u,\theta_{0},c_{1}} ^{(\alpha)} }\sum _{n_{0}=0}^\infty \left(  \sum _{n_{1}=1}^\infty
\bar \gamma_{-(N+1+n_{1}+n_{0}),\alpha,\theta_{0}}\right.\\
& \sum _{n_{2}=0}^\infty \gamma_{-(N+1+n_{1}+n_{2}),\alpha,\theta_{0}} \cdots 
  \sum _{n_{2m-1}=1}^\infty \bar \gamma_{-(N+1+n_{2m-1}+n_{2m-2}),\alpha,\theta_{0}}\\
 & \left.  \sum _{n_{2m}=0}^\infty\gamma_{-(N+1+n_{2m-1}+n_{2m}),\alpha,\theta_{0}} 
  \bar \gamma_{-(u-(N+1+n_{2m}),\alpha,\theta_{0})}\right) \chi^{n_{0}}
\end{align*}
\subsection{Application to the orthogonal polynomials}
With the corollary \ref{INVERS2} and the hypothesis on $\beta _{0}$ 
the equality in the corollary \ref{INVERS2} becomes, for $l=1$,
\begin{equation} \label{STAR}
\left( T_{N}(f)\right)^{-1} _{1,k+1}= \beta _{k} - \sum_{u=0}^k \beta_{k-u}  H_{N}(u)
\end{equation}
with
\begin{align*}
H_{N}(u)
&=
\sum_{m=0}^{+\infty}\left( \sum _{n_{0}=0}^\infty\gamma_{N+1+n_{0},\alpha,\theta_{0}} \left(  \sum _{n_{1}=0}^\infty
\bar \gamma_{-(N+1+n_{1}+n_{0}),\alpha,\theta_{0}}\right.\right.\\
& \sum _{n_{2}=0}^\infty \gamma_{-(N+1+n_{1}+n_{2}),\alpha,\theta_{0}} \cdots 
  \sum _{n_{2m-1}=0}^\infty \bar \gamma_{-(N+1+n_{2m-1}+n_{2m-2}),\alpha,\theta_{0}}\\
 & \left. \left. \sum _{n_{2m}=0}^\infty\gamma_{-(N+1+n_{2m-1}+n_{2m}),\alpha,\theta_{0}} 
  \bar \gamma_{(u-(N+1+n_{2m}),\alpha,\theta_{0}}\right) \right)
\end{align*}
The remainder of the paper is devoted to the computation of the coefficients 
$\beta_{k}= \widehat{g^{-1}} (k)$, $\gamma_{k}= \widehat{\frac{g}{\bar g} }$ and $H_{N}(u)$ which appears in the inversion formula. For each step we obtain the corresponding terms for the symbol $2^\alpha(1-\cos \theta) c$ mulitiplied by a rigonometric coefficient (see \cite{RS10}). That provides the expected link with the formulas in Theorem \ref{COEF}.
\section{Demonstration of Theorem \ref{COEF}}
\subsection{Asymptotic of $\beta_{k}$}
\begin{prop} \label{PROP1}
With the hypothesis of Theorem \ref{COEF} we have, for sufficiently large 
$k$, 
$$ 
\beta_{k}
=\frac{k^{\alpha_{1}-1}} {\Gamma (\alpha_{1})} \sum_{j=1}^m K_{j}
 \overline{\chi_{j}} ^k c_{1}^{-1} (\chi_{j}) + o(k^{\tau_{1}-1})
$$
uniformly in $k$,
with 
$K_{j} = \displaystyle{ \prod_{h=1,h\neq j} ^M (1-\overline{\chi_{h}}\chi_{j})^{-\alpha_{h}}}$,
and 
$ \tau_{1}=\alpha_{1}$ if $\alpha_{1}>0$ and $\tau_{1}<=alpha_{1}-\frac{1}{2}$ else.%%%.
\end{prop}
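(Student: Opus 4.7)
The plan is to isolate inside $g^{-1}$ the elementary singular pieces that carry the dominant Fourier asymptotic, then to dispose of a smoother remainder by means of a partition of unity argument. Writing
$$g^{-1}(\chi)=c_{1}^{-1}(\chi)\prod_{h=1}^{M}(1-\overline{\chi_{h}}\chi)^{-\alpha_{h}},$$
I observe that near each singular point $\chi=\chi_{j}$ the only singular factor is $(1-\overline{\chi_{j}}\chi)^{-\alpha_{j}}$; the complementary factor $c_{1}^{-1}(\chi)\prod_{h\neq j}(1-\overline{\chi_{h}}\chi)^{-\alpha_{h}}$ is smooth at $\chi_{j}$ with value $c_{1}^{-1}(\chi_{j})K_{j}$. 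This leads to the decomposition
$$g^{-1}(\chi)=\sum_{j=1}^{M}c_{1}^{-1}(\chi_{j})\,K_{j}\,(1-\overline{\chi_{j}}\chi)^{-\alpha_{j}}+R(\chi),$$
and, since the bracketed coefficient attached to the $j$-th singular factor inside $R$ vanishes at $\chi_{j}$, one has locally $R(\chi)=O\bigl((1-\overline{\chi_{j}}\chi)^{-\alpha_{j}+1}\bigr)$ plus terms that are smooth at $\chi_{j}$.

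For the explicit sum, I would use the classical binomial identity
$$\widehat{(1-\overline{\chi_{j}}\chi)^{-\alpha_{j}}}(k)=\overline{\chi_{j}}^{\,k}\,\frac{\Gamma(k+\alpha_{j})}{\Gamma(\alpha_{j})\Gamma(k+1)}=\frac{\overline{\chi_{j}}^{\,k}k^{\alpha_{j}-1}}{\Gamma(\alpha_{j})}\bigl(1+O(k^{-1})\bigr),$$
obtained by expanding $(1-z)^{-\alpha_{j}}$ as a power series and applying Stirling's formula to the resulting ratio of gamma functions. Multiplying by $c_{1}^{-1}(\chi_{j})K_{j}$ and summing over $j$, the indices with $\alpha_{j}<\alpha_{1}$ contribute terms of order $k^{\alpha_{j}-1}=o(k^{\alpha_{1}-1})$ that are absorbed in the error, leaving exactly the $m$ leading-exponent contributions with $\alpha_{j}=\alpha_{1}$ that appear in the statement.

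The main obstacle is controlling $\hat R(k)$. I would introduce a smooth partition of unity $1=\sum_{j=1}^{M}\psi_{j}$ on $\mathbb T$ with $\psi_{j}$ supported in a small neighborhood of $\chi_{j}$, write $R=\sum_{j}R\psi_{j}$, and notice that each $R\psi_{j}$ has a single algebraic singularity at $\chi_{j}$ of type $(1-\overline{\chi_{j}}\chi)^{-\alpha_{j}+1}$ times a $C^{\infty}$ factor. A single-singularity Fourier asymptotic, in the spirit of the machinery underlying Theorem \ref{PREDIZERO} (or directly from Stirling applied to the binomial coefficients of $(1-\overline{\chi_{j}}\chi)^{-\alpha_{j}+1}$), then gives $\widehat{R\psi_{j}}(k)=O(k^{\alpha_{j}-2})$, so that $\hat R(k)=O(k^{\alpha_{1}-2})$. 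Since $\alpha_{1}-2<\alpha_{1}-1$ and $\alpha_{1}-2<\alpha_{1}-\frac{3}{2}$, this is $o(k^{\tau_{1}-1})$ in both cases $\tau_{1}=\alpha_{1}$ and $\tau_{1}\le\alpha_{1}-\frac{1}{2}$ listed in the proposition. The delicate technical point lies in making this local-to-global Fourier estimate rigorous and uniform in $k$, which is precisely where the smoothness of $c_{1}=|P/Q|^{2}$ and the separation of the $\chi_{j}$'s enter.
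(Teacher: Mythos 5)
Your route is genuinely different from the paper's. The paper proves Lemma~\ref{PRELI} by induction on the number of singular factors: it writes $\tilde g^{-1}$ as the convolution of a product of $M$ factors with one additional factor, splits the convolution sum $\sum_u\widehat{g_M^{-1}}(u)\widehat{g_{M+1}^{-1}}(k-u)$ into three ranges $[0,k_0]$, $[k_0+1,k-k_1-1]$, $[k-k_1,k]$, and estimates each piece from the Zygmund asymptotic (\ref{ZYG}) together with the Abel-summation bound of the Appendix. It then obtains $\beta_k$ from $\tilde\beta_k$ by a second convolution, $\beta_k=\sum_s\tilde\beta_s\,\widehat{c_1^{-1}}(k-s)$, exploiting the rapid decay of $\widehat{c_1^{-1}}$. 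Your argument, by contrast, does \emph{singularity subtraction} on the symbol itself: you freeze the smooth cofactor $\phi_j=c_1^{-1}\prod_{h\neq j}(1-\overline{\chi_h}\chi)^{-\alpha_h}$ at $\chi_j$, peel off $\sum_j\phi_j(\chi_j)(1-\overline{\chi_j}\chi)^{-\alpha_j}$, read the main term directly from the binomial/Stirling asymptotic of a pure power, and bound the residual $R$ — whose local singularity is damped to exponent $1-\alpha_j$ — through a partition of unity. Conceptually this is cleaner and more modular, since the $M$-singularity problem is reduced at once to $M$ independent single-singularity problems together with a completely smooth piece, and it bypasses the $M$-step induction.

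That said, you should be aware that the difficulty has been relocated rather than eliminated. The estimate $\widehat{R\psi_j}(k)=O(k^{\alpha_j-2})$ requires the Fourier asymptotic for a product of a compactly supported smooth cut-off with an algebraic singularity $(1-\overline{\chi_j}\chi)^{1-\alpha_j}$; this is precisely the ``single singularity times smooth multiplier'' estimate, which is also the content of the paper's convolution step from $\tilde\beta_k$ to $\beta_k$ (equations (\ref{MAJOR4}), (\ref{MAJOR5}), (\ref{NEUF})). You flag this as the ``delicate technical point'' but do not prove it, so to close the argument you would still need to run essentially the same convolution estimate — once, in the reduced single-singularity setting, rather than $M$ times. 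One minor further remark: when you absorb the indices with $\alpha_j<\alpha_1$ into the error you obtain $o(k^{\alpha_1-1})$. This matches what the paper's proof actually delivers (see items~1 and~2 at the end of the proof of Lemma~\ref{PRELI}), even though the statement of the Proposition announces the sharper bound $o(k^{\tau_1-1})$ with $\tau_1\le\alpha_1-\tfrac12$ when $\alpha_1<0$; your sketch therefore inherits the same small discrepancy between the stated and the actually-proved error exponent, and this is not a defect specific to your approach.
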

First we have to prove the lemma 
\begin{lemma} \label{PRELI}
With the hypothesis of Theorem \ref{COEF}  we have, for a sufficiently large $k$. 
$$ 
\tilde {\beta}_{k}  
= \frac{k^{\alpha_{1}-1}} {\Gamma (\alpha_{1})} \sum_{j=1}^m K_{j}
 \overline{\chi_{j} }^k + o(k^{\alpha_{1}-1})$$
uniformly in $k$, and with $\tau_{1}$ as in Property \ref{PROP1}
\end{lemma}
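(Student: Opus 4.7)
The target is the Fourier coefficient $\tilde\beta_k = \widehat{\tilde g^{-1}}(k)$ of the product $\tilde g^{-1}(\chi)=\prod_{j=1}^M(1-\overline{\chi_j}\chi)^{-\alpha_j}$, a function with Fisher--Hartwig type singularities at the distinct circle points $\chi_j$. My plan is a Darboux / singularity-analysis argument: each $\chi_j$ should contribute a term of size $k^{\alpha_j-1}\overline{\chi_j}^{\,k}$ to $\tilde\beta_k$, so after summation only the indices $j\le m$ realizing the maximal exponent $\alpha_1$ survive.

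First I would record the one-singularity building block $\beta_u^{(\alpha)}=\widehat{(1-\chi)^{-\alpha}}(u)=\Gamma(\alpha+u)/(\Gamma(\alpha)\,u!)$, whose Stirling asymptotic reads $\beta_u^{(\alpha)}=u^{\alpha-1}/\Gamma(\alpha)+O(u^{\alpha-2})$, together with the translation identity $\widehat{(1-\overline{\chi_j}\chi)^{-\alpha_j}}(k)=\overline{\chi_j}^{\,k}\beta_k^{(\alpha_j)}$. Then I would localize via a smooth partition of unity $1=\varphi_0+\sum_{j=1}^M\varphi_j$ on $\mathbb T$, with $\varphi_j$ supported in a small arc around $\chi_j$ avoiding every other $\chi_h$ and $\varphi_0$ supported off all singular points. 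Since $\varphi_0\tilde g^{-1}$ is $C^\infty$ on $\mathbb T$, its Fourier coefficients decay faster than any power of $k$. On the support of $\varphi_j$ the complementary product $U_j(\chi)=\prod_{h\ne j}(1-\overline{\chi_h}\chi)^{-\alpha_h}$ is smooth with $U_j(\chi_j)=K_j$, so I would split
$$\varphi_j\tilde g^{-1}=K_j\,\varphi_j(1-\overline{\chi_j}\chi)^{-\alpha_j}+\varphi_j(U_j-K_j)(1-\overline{\chi_j}\chi)^{-\alpha_j}.$$

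For the leading piece the rotation $\chi\mapsto\chi_j\chi$ reduces it to $\overline{\chi_j}^{\,k}\widehat{\tilde\varphi_j(\chi)(1-\chi)^{-\alpha_j}}(k)$ with $\tilde\varphi_j$ a cutoff equal to $1$ near $\chi=1$; convolving the rapidly decreasing coefficients of $\tilde\varphi_j$ against $\beta_k^{(\alpha_j)}$ then delivers $\overline{\chi_j}^{\,k}\,k^{\alpha_j-1}/\Gamma(\alpha_j)+o(k^{\alpha_j-1})$. For the remainder piece, the fact that $U_j-K_j$ vanishes at $\chi_j$ effectively drops the singular exponent from $\alpha_j$ to $\alpha_j-1$ and one recovers a Fourier coefficient of size $O(k^{\alpha_j-2})=o(k^{\alpha_j-1})$. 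Summing over $j$ and using that for $j>m$ the bound $O(k^{\alpha_j-1})$ is itself $o(k^{\alpha_1-1})$ gives the stated formula.

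The main obstacle is turning these qualitative singularity-analysis heuristics into the uniform quantitative error $o(k^{\alpha_1-1})$: one must justify that multiplying by a smooth factor vanishing at $\chi_j$ really gains a full power of $k^{-1}$ on the Fourier side, which requires either an explicit Taylor expansion of $U_j$ around $\chi_j$ combined with the convolution formula for Fourier coefficients, or an Abel-type summation argument analogous to the one used in \cite{RS10} for the single-singularity symbol $h_\alpha$ (which is exactly the $M=1$ instance of the present lemma). Uniformity also needs checking in the boundary regime $\alpha_1\le 0$, where the singular factor $(1-\overline{\chi_j}\chi)^{-\alpha_j}$ is bounded and one has to be more careful that the remainder contribution really beats the main term in size.
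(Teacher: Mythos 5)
Your argument is sound but takes a genuinely different route from the paper. You use a \emph{localization} strategy: a smooth partition of unity $1=\varphi_0+\sum_j\varphi_j$ isolates each singularity, then near $\chi_j$ you freeze the smooth complementary factor $U_j=\prod_{h\neq j}(1-\overline{\chi_h}\chi)^{-\alpha_h}$ at its value $K_j=U_j(\chi_j)$, apply a rotation and a transfer lemma to read off the leading asymptotics from $\beta_k^{(\alpha_j)}$, and observe that $U_j-K_j$ vanishes at $\chi_j$ so its contribution drops one power (one can make this sharp by writing $U_j(\chi)-K_j=(1-\overline{\chi_j}\chi)V_j(\chi)$ with $V_j$ smooth on $\mathrm{supp}\,\varphi_j$, reducing the singular exponent from $\alpha_j$ to $\alpha_j-1$). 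The paper instead proceeds by \emph{induction on the number of singularities $M$}: it writes $\tilde\beta_k$ as the discrete convolution $\sum_{u=0}^k\widehat{g_M^{-1}}(u)\,\widehat{g_{M+1}^{-1}}(k-u)$, inserts Zygmund's asymptotic for $\widehat{(1-\chi)^{-\alpha_{M+1}}}$, and splits the sum at the threshold points $k_0=k^\gamma$ and $k-k_1=k-k^{\gamma_1}$ into three pieces $S_1,S_2,S_3$. The boundary pieces $S_1,S_3$ reproduce the singular contributions of $\chi_{M+1}$ and of $\chi_1,\dots,\chi_M$ respectively, while the middle piece $S_2$ is shown to be negligible via Lemma~\ref{APPENDIX1} (the Abel-summation estimate you also invoke). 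Both approaches reach the same conclusion; yours is more structural and cleanly separates the role of each singular point, at the cost of needing a transfer lemma and careful bookkeeping of the cutoffs, while the paper's is more elementary and self-contained but entangles the $M+1$ singularities in a single convolution, which is why it resorts to induction and to the delicate choice of $\gamma,\gamma_1$. One small caution: the constant $K_j$ must exclude the factor $h=j$ (as in Property~\ref{PROP1}, and as you correctly obtain $K_j=U_j(\chi_j)$); the version displayed in Theorem~\ref{COEF} with $\prod_{h=1}^M$ is a typo, since it would contain the vanishing factor $1-\overline{\chi_j}\chi_j$.
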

\begin{remark}
In these two last statements``uniformly in $k$ '' means 
$$ \forall \epsilon>0, \exists k_{\epsilon} \in \mathbb N \quad \mathrm{such}\, \mathrm{that} : \, 
\forall k, k \ge k_{\epsilon}$$
%we have an integer 
%$k_{\epsilon}$ wich verifies  for all $k\ge k_{\epsilon}$
$$\Bigl \vert  \beta_{k} 
- \frac{k^{\alpha_{1}-1}} {\Gamma (\alpha_{1})} \sum_{j=1}^m K_{j}
 \overline{\chi_{j} }^k c_{1}^{-1} (\chi_{j}) \Bigr \vert <\epsilon k^{\tau_{1}-1}$$
and 
$$\Bigl \vert \tilde \beta_{k}
- \frac{k^{\alpha_{1}-1}} {\Gamma (\alpha_{1})} \sum_{j=1}^m K_{j}
 \overline{\chi_{j}} ^k \Bigr \vert <\epsilon k^{\tau_{1}-1}.$$
\end{remark}
\begin{proof}{of Lemma \ref{PRELI}} 
 Put 
 $ g_{M}= \displaystyle{\prod_{h=1}^M} (1- \overline{\chi_{h}} \chi)^{-\alpha_{h}}$ 
 and 
 $g_{M+1}= (1-\overline{\chi_{M+1}} \chi)^{-\alpha_{M+1}}$. Assume 
$\displaystyle{ \widehat{g^{-1}}(k) = \frac{k^{\alpha_{1}-1}} {\Gamma(\alpha_{1})}\sum_{j=1}^{m} K_{j}\overline{\chi_{j}}^k}$. 
Put $k_0=k^\gamma$ and $k_{1}= k^{\gamma_{1}} $ with $0<\gamma,\gamma_{1}<1$ andfor $u>k_{0},(k-k_{1})$ we have  
\begin{equation} \label{ZYG}
 \widehat{(1-\chi)^{-\alpha_{M+1}} } (u) =\frac{ u^{\alpha_{M+1}-1} }{\Gamma (\alpha_{M+1})}+O(k^{\alpha_{M+1}-2})
\end{equation}
uniformly in $u$ (see \cite{Zyg2}).
Writting for $k\ge k_{0}, $
$ \tilde{\beta}_{k} = S_{1}+S_{2}+S_{3}$ with \\
$S_{1}= \displaystyle{ \sum_{u=0}^{k_{0}} \widehat{g_{M}^{-1} }(u) \widehat{g_{M+1}^{-1} } (k-u)}$,
$S_{2}= \displaystyle{ \sum_{u=k_{0}+1}^{k-k_{1}-1} } \widehat{g_{M}^{-1} }(u) \widehat{g_{M+1}^{-1} } (k-u)$\\
and 
$S_{3}= \displaystyle{ \sum_{u=k-k_{1}}^k} \widehat{g_{M}^{-1} }(u) \widehat{g_{M+1}^{-1} } (k-u)$.
The first sum is also 
\begin{align*}
S_{1} &=  \sum_{u=0}^{k_0}  \widehat{g_{M}^{-1} }(u) \left(\widehat{g_{M+1}^{-1} } (k-u) 
- {\overline{\chi_{M+1}}}^{k-u} \frac{ (k-u)^{\alpha_{M+1}-1}} {\Gamma (\alpha_{M+1})} \right) 
\\
&+ \sum_{u=0}^{k_0} \left(  \chi_{M+1}^{u} \frac{ (k-u)^{\alpha_{M+1}-1}} {\Gamma (\alpha_{M+1})}  \right)   {\overline{\chi_{M+1}}}^{k}
\end{align*}
We observe that 
\begin{align*} 
&\sum_{u=0}^{{k_0}}  \widehat{g_{M}^{-1} }(u) \left(\widehat{g_{M+1}^{-1} } (k-u) 
- {\overline{\chi_{M+1}}}^{k-u} \frac{ (k-u)^{\alpha_{M+1}-1}} {\Gamma (\alpha_{M+1})} \right)
\\
&= \sum _{u=0}^{k_{0}} O\left( (k-u)^{\alpha_{M+1}-2}\right)= O\left( (k-u)^{\alpha_{M+1}-1}
- k^{\alpha_{M+1}-1} \right)
\end{align*}
Since  $0 \le \alpha_{1}- \alpha_{M+1}+\frac{1}{2}$ we may assume $ \gamma< \alpha_{1}- \alpha_{M+1}+\frac{1}{2}$ and we get
\begin{equation}\label{MAJOR1}
\Bigl \vert \sum_{u=0}^{k_0}  \widehat{g_{M}^{-1} }(u) \left(\widehat{g_{M+1}^{-1} } (k-u) 
- {\overline{\chi_{M+1}}}^{k-u} \frac{ (k-u)^{\alpha_{M+1}-1}} {\Gamma (\alpha_{M+1})} \right)
\Bigr \vert = o(k^{\tau_{1}-1}).
\end{equation} 
%%%% 02/05 
It turns out that
\begin{align*}
S_{1} &=  
 \sum_{u=0}^{k_0}  \widehat{g_{M}^{-1} }(u) 
{\overline{\chi_{M+1}}}^{k-u}\left( \frac{ (k-u)^{\alpha_{M+1}-1}- k^{\alpha_{M+1}-1}} {\Gamma (\alpha_{M+1})} \right) \\
&+  \sum_{u=0}^{k_0}  \widehat{g_{M}^{-1} }(u) 
{\overline{\chi_{M+1}}}^{k-u} \frac{k^{\alpha_{M+1}-1}}  {\Gamma (\alpha_{M+1})} +o(k^{\tau_{1}-1})
\end{align*}
with, for $\gamma< \frac{\alpha_{1}-\alpha_{M+1}+1}{2}$ 
\begin{align*}
\Bigl \vert \sum_{u=0}^{k_0}  \widehat{g_{M}^{-1} }(u) \left(
{\overline{\chi_{M+1}}}^{k-u} \frac{ (k-u)^{\alpha_{M+1}-1}- k^{\alpha_{M+1}-1}} {\Gamma (\alpha_{M+1})} \right) \Bigr \vert &= O( k^{\alpha_{M+1}-2}) \sum_{u=0}^{k_{0}} u \\
 &= O(k^{\alpha_{M-1}+2\gamma}-2)= o(k^{\tau_{1}-1}).
\end{align*}
On the other hand 
\begin{align*}
 \sum_{u=0}^{k_0}  \widehat{g_{M}^{-1} }(u) \chi_{M+1}^{u} &=
  \sum_{u=0}^{+\infty}  \widehat{g_{M}^{-1} }(u) \chi_{M+1}^{u} \\
  & -  \sum_{u=k_0+1}^{\infty}  \widehat{g_{M}^{-1} }(u) \chi_{M+1}^{u}.
  \end{align*}
  Using the appendix we get 
  $ \displaystyle{\Bigl \vert  \sum_{u=k_0+1}^{\infty}  \widehat{g_{M}^{-1} }(u) \chi_{M+1}^{u} 
  \Bigl \vert = O (k_{0}^{\alpha_{1}-1})},$
  and $k^{\alpha_{M+1}-1} k_{0}^{\alpha_{1}-1} = o( k^{\tau_{1}-1})$ since 
  $ \alpha_{M+1}-1+ \gamma(\alpha_{1}-1)<\alpha_{1}-\frac{3}{2}$.  Hence 
  $$S_{1} = \frac{k^{\alpha_{M+1}-1} }{\Gamma (\alpha_{M}+1)} \overline{\chi_{M+1}} ^k 
  \left( \prod_{j=1}^M (1-\chi_{M+1}\overline{\chi_{j}} )^{-\alpha_{j}}\right) 
  + o(k^{\tau_{1}-1})$$
  uniformly in $k$.
  Identically we get 
  $$ S_{3} =  \frac{k^{\alpha_{1}-1} }{\Gamma (\alpha_1)} \sum_{j=0}^m {\overline{ \chi_{j}}}^k 
  \left( \prod_{h=1, h\neq j}^{M+1} (1-\chi_{j}\overline{\chi_{h}})^{-\alpha_{h}}\right) +o(k^{\tau_{1}-1}),
  $$
  uniformly in $k$.
  Finally we can remark that the appendix provides \\
   $S_{2}= O (\max ( k_{0}^{\alpha_{1}-1} k ^{\alpha_{M+1}-1} , k_{1}^{\alpha_{M+1}-1} k^{\alpha_{1}-1})
   =o(k^{\tau_{1}-1})$
    uniformly in $k$.
    We have obtain 
    \begin{enumerate}
    \item
    for $\alpha_{M+1} < \alpha_{1}$, 
    $$\beta_{k} =  \frac{k^{\alpha_{1}} }{\Gamma (\alpha_1)} \sum_{j=0}^m {\overline{ \chi_{j}}}^k 
  \left( \prod_{h=1, h \neq j}^{M+1} (1-\chi_{j}\overline{\chi_{h}})^{-\alpha_{h}}\right) +o(k^{\alpha_{1}-1}),$$
  \item
  for $\alpha_{M+1} = \alpha_{1}$
\begin{align*}
 \beta_{k} &=  \frac{k^{\alpha_{1}} }{\Gamma (\alpha_1)} \left(  \sum_{j=0}^m {\overline{ \chi_{j}}}^k 
  \left( \prod_{h=1, h\neq j }^{M+1} (1-\chi_{j}\overline{\chi_{h}})^{-\alpha_{h}}\right) \right.
  \\
  &+\left.
  {\overline{ \chi_{M+1}}}^k \left( \prod _{j=1}^M (1- \chi_{M+1} \overline{ \chi_{j}} ^{-\alpha_{j} }\right)\right)
  + o(k^{\alpha_{1}-1}).
\end{align*}
    \end{enumerate}
    that ends the proof of the lemma.
\end{proof}
To ends the proof of the property we need to obtain 
$\beta_{k}$ from 
$\tilde{\beta}_{k}$ for a sufficiently large $k$. We can remark that a similar case has been  treated in \cite{RS1111} for the function $\left((1-\chi)^\alpha c_1\right)^{-1}$. Here we develop the same idea than in this last paper.
Let $c_{m}$ the coefficient of Fourier of order $m$ of the function $c_{1}^{-1}$. The hypotheses 
on $c_{1}$ imply that $c_{1}^{-1}$ is in $A(\mathbb T , p) =\{ h \in L^2 (\mathbb T) \vert
 \sum_{u\in \mathbb Z} u^p \vert \hat h (u)\vert <\infty \}$ for all positive integer $p$ ( because 
 $c_{1}^{-1} \in C^\infty (\mathbb T)$ and for all positive integer $ \vert \widehat {h^{(p)}} \vert = \frac{1}{p} \vert \widehat {h} \vert $) . 
 We have, 
 $\displaystyle{\beta_{k} = \sum_{s=0}^k \tilde\beta_{k} c_{k-s}}.$
 For $0<\nu<1$ we can write 
 $$ \sum_{s=0}^k \tilde \beta_{s} c_{k-s} = \sum_{s=0}^{k-k^\nu}\tilde  \beta_{s} c_{k-s} + \sum_{s=k-k^\nu+1 }^k \tilde \beta_{s} c_{k-s}.$$ 
Lemma \ref{PRELI} provides, with the same notations, 
$$
\sum_{s=k-k^\nu+1 }^k \tilde \beta_{s} c_{m-s} = 
\sum_{j=0}^m K_{j}\sum_{s=k-k^\nu+1 }^k  \frac{s^{\alpha_{1}} }{\Gamma (\alpha_1)} {\overline{ \chi_{j}}}^s 
c_{k-s} +R
$$
with 
 $\vert R \vert = 
   o(m^{\tau_{1}-1}) \sum_{s=k-k^\nu+1 }^k \vert c_{k-s}\vert. 
$
Since $\sum_{s\in \mathbb Z} \vert c_{s}\vert <\infty$, we have 
$$ \sum_{s=k-k^\nu+1 }^k \tilde \beta_{s} c_{k-s} = \sum_{j=0}^k K_{j} \sum_{s=k-k^\nu+1} ^k
\frac{s^{\alpha_{1}-1}}{\Gamma(\alpha_{1}} {\overline{ \chi_{j}}}^s 
c_{k-s} + o(m^{\tau_{1}-1}).$$
We have 
\begin{equation} \label{MAJOR4}
\Bigl \vert \sum_{s=k-k^\nu}^k (s^{\alpha-1} - k^{\alpha-1})  c_{k-s}\Bigr \vert \le (1-\alpha) 
O( k^{\nu+\alpha-2}) \sum_{s=k-k^\nu+1}^m \vert c_{k-s}\vert.
\end{equation}
and the convergence of $(c_{s})$ implies 
\begin{align*}
& \sum_{s=k-k^\nu} ^k \frac{s^{\alpha_{1}-1}-k^{\alpha_{1}-1}+k^{\alpha_{1}-1}} {\Gamma (\alpha_{1})} 
{\overline{ \chi_{j}}}^s c_{k-s} 
\\ &=  \frac{k^{\alpha_{1}-1}} {\Gamma (\alpha_{1})} \sum_{s=k-k^\nu} ^k 
{\overline{ \chi_{j}}}^s c_{k-s }+ O( k^{\alpha-2+\nu})\\
&\frac{k^{\alpha_{1}-1}}{\Gamma(\alpha_{1})}{\overline\chi}^k
\left( \sum_{v=0}^\infty {\overline\chi}^v c_{v }- \sum_{v=k^\nu+1}^\infty {\overline\chi}^v c_{v }\right)
\end{align*}
For all positive integer $p$ the function $c_{1} \in A(p,\mathbb T)$). Hence one can prove first  
\begin{equation}\label{MAJOR5}
 \Bigr \vert \sum_{v= k^\nu+1}^\infty e^{+iv\theta} c_{v} \Bigl \vert \le (k^{-p\nu}) \sum_{s\in \mathbb Z} 
 \vert c_{s}\vert 
\end{equation}
and secondly
$$
\sum_{s=k-k^\nu} ^k {\overline{ \chi_{j}}}^s c_{k-s }= 
\overline{\chi_{j}}^k c_{1}^{-1} (\overline{ \chi_{j}})
+O(k^{-p\nu}).
$$
 On the other hand we have (always because $c_{1}^{-1}$ in $A(\mathbb T,p)$) 
\begin{equation}\label{NEUF}
 \Bigr \vert \sum_{s=0}^{k-k^\nu} \tilde\beta_{s} 
 c_{k-s} \Bigl \vert 
\le 
 \frac{1}{k^{p\nu }} \sum_{v\in \mathbb Z} v^p \vert c_{v}\vert  
 \max_{s\in \mathbb N} (\vert \tilde \beta_{s}\vert).
 \end{equation}
 For a good choice of $p$ and $\nu$ we obtain 
  the expected formula for $\beta_{k}.$ The uniformity is provided by Lemma \ref 
  {PRELI} and the equation (\ref{MAJOR4}), (\ref{MAJOR5}) and (\ref {NEUF}).

\subsection{Estimation of the Fourier coefficients of $\frac{g}{\overline{g}}$.}
Let $\gamma_{k}$ be $ \widehat{\frac{g} 
{\overline{g }} }(k) $ and  $\tilde\gamma_{k}$ be $ \widehat{\frac{\tilde g} 
{\overline{\tilde g }}} (k). $ 

\begin{prop} \label{prop2}
With the hypothesis of Theorem \ref{COEF} we have, for all integer $k \ge 0$  sufficiently large
$$ 
\gamma_{-k}
= \frac{1}{k}\sum_{j=1}^M  \frac { \sin (\pi\alpha_{j})} {\pi} H_{j}
 \frac{c_{1}(\chi_{j})}{\overline{c_{1}(\chi_{j})}} 
 {\overline{\chi_{j}}}^k+o(k^{\min(\alpha_{1}-1,-1)})$$

 uniformly in $k$ and with $H_{j} = \displaystyle{ \prod_{j=1,h\neq}^M \left( \frac{\overline{\chi_{h}}\chi_{j}-1}
 {\chi_{h}\overline{\chi_{j}}-1}\right)^{\alpha_{j}}}$.
\end{prop}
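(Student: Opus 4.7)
The plan is to mirror the two-stage strategy used in Proposition \ref{PROP1}. I would first establish the analogue for the pure Fisher-Hartwig ratio $\tilde g/\overline{\tilde g}=\prod_{j=1}^{M}(1-\overline{\chi_j}\chi)^{\alpha_j}/(1-\chi_j\overline{\chi})^{\alpha_j}$, namely
$$\tilde\gamma_{-k}=\frac{1}{k}\sum_{j=1}^{M}\frac{\sin(\pi\alpha_j)}{\pi}\,H_j\,\overline{\chi_j}^{\,k}+o(k^{\min(\alpha_1-1,-1)}),$$
and then reinstate the smooth factor $c_1/\overline{c_1}$ by convolution, exactly as Proposition~\ref{PROP1} reinstates $c_1^{-1}$ to pass from $\tilde\beta_k$ to $\beta_k$.

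For the auxiliary statement on $\tilde\gamma_{-k}$, I would induct on $M$. The base case $M=1$ reduces to an elementary computation: on one period the identity $1-e^{i(\theta-\theta_1)}=2\sin((\theta-\theta_1)/2)\,e^{i(\theta-\theta_1-\pi)/2}$ exhibits the single-factor ratio as the pure exponential $e^{i\alpha_1(\theta-\theta_1-\pi)}$, and a direct integration against $e^{ik\theta}$ over one period yields the closed form $\sin(\pi\alpha_1)/(\pi(\alpha_1+k))$ multiplied by the expected unimodular factor at $\chi_1$, which is the predicted leading term plus $O(k^{-2})$. For the inductive step, writing the level-$(M{+}1)$ pure symbol as the product of the level-$M$ one and a single new Fisher-Hartwig factor at $\chi_{M+1}$, I would expand the Fourier coefficient at order $-k$ of the product as a convolution in an index $u$ and split it into three ranges $0\le u\le k_0$, an interior block, and $k-k_1\le u\le k$, in the exact style of Lemma~\ref{PRELI}. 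The endpoint sums, after Abel-type summation and use of the inductive hypothesis, deliver the two contributions corresponding to the new singularity $\chi_{M+1}$ and to the previously accumulated ones, the remaining Fisher-Hartwig factors collapsing, at the active singularity, to the product~$H_j$. The interior block is absorbed into $o(k^{\min(\alpha_1-1,-1)})$ uniformly, thanks to the bounds recorded in the appendix of \cite{RS10}.

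Once $\tilde\gamma_{-k}$ is known, I would exploit $g/\overline{g}=(\tilde g/\overline{\tilde g})(c_1/\overline{c_1})$ to write $\gamma_{-k}=\sum_{s}\tilde\gamma_{-k-s}\,d_s$ with $d_s=\widehat{c_1/\overline{c_1}}(s)$. Because $c_1$ is a rational function without zeros on $\mathbb T$, the quotient $c_1/\overline{c_1}$ lies in $A(\mathbb T,p)$ for every positive integer $p$, hence $|d_s|$ decays faster than any polynomial in $|s|$. I would split the convolution at $|s|=k^\nu$ for small $\nu>0$, replace $\tilde\gamma_{-k-s}$ on the inner range by its stage-one main term, and observe that each singularity contributes $\overline{\chi_j}^{\,k}(\sin(\pi\alpha_j)H_j/(\pi k))\sum_{s}\overline{\chi_j}^{\,s}d_s$; this inner geometric sum converges, as $k\to\infty$, to $(c_1/\overline{c_1})(\chi_j)=c_1(\chi_j)/\overline{c_1(\chi_j)}$. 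The tail $|s|\ge k^\nu$ is controlled uniformly in the spirit of estimate~(\ref{NEUF}).

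The hardest point will be the combinatorics of the inductive stage: I must verify that convolving the single-factor $\sin(\pi\alpha_{M+1})/(\pi k)\,\overline{\chi_{M+1}}^{\,k}$-term against each inductive $\sin(\pi\alpha_j)/(\pi k)\,\overline{\chi_j}^{\,k}$-term regenerates exactly the claimed product $H_j$. This rests on the algebraic identity that the remaining Fisher-Hartwig factors, evaluated at the active singularity $\chi_j$, produce the ratio $\prod_{h\ne j}((\overline{\chi_h}\chi_j-1)/(\chi_h\overline{\chi_j}-1))^{\alpha_h}$; a parallel identity underlies the passage from Lemma~\ref{PRELI} to the $c_1$-weighted case in Proposition~\ref{PROP1}. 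A secondary delicacy is the uniform remainder $o(k^{\min(\alpha_1-1,-1)})$, whose shape switches between $k^{-1}$ and $k^{\alpha_1-1}$ according to the sign of $\alpha_1$, mirroring the case split encoded in $\tau_1$ in Proposition~\ref{PROP1}.
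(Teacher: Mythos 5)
Your proposal follows the paper's strategy closely: a two-stage argument (first the pure Fisher--Hartwig ratio $\tilde g/\overline{\tilde g}$, then reinstating $c_1/\overline{c_1}$ by a convolution against rapidly decaying Fourier coefficients), with the first stage handled by induction on $M$ via a convolution split whose diagonal blocks carry the main terms, and the base case read off from the pure-exponential form of a single factor. One small but genuine discrepancy: you propose to split the convolution into three ranges ``in the exact style of Lemma~\ref{PRELI}''. That three-range split is legitimate in Lemma~\ref{PRELI} only because $g_M^{-1}$ and $g_{M+1}^{-1}$ both lie in $H^{2+}$, confining the convolution index to $[0,k]$. Here $\tilde g/\overline{\tilde g}$ is not analytic; its Fourier coefficients decay only like $1/\vert u\vert$ on \emph{both} sides, so the convolution runs over all of $\mathbb{Z}$, and the paper's proof of Lemma~\ref{PRELI2} accordingly uses a five-range decomposition with two additional tail ranges ($u<-k-k_0$ and $u>k_0$), each bounded by $O((k_0k)^{-1})$ via the Abel-summation estimate of the appendix. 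Your sketch as written leaves those two tails unaccounted for; the fix is routine but needed. With that adjustment, the remaining steps (endpoint blocks yielding the $H_j$ products, Euler--Maclaurin for the interior, and the $A(\mathbb{T},p)$ decay of $\widehat{c_1/\overline{c_1}}$ for the second stage) match the paper's argument.
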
 
First we have to prove the lemma 
\begin{lemma} \label{PRELI2}
With the hypothesis of Theorem \ref{COEF} we have, for all integer $k \ge 0$  sufficiently large
$$ 
\tilde \gamma_{-k}
= \frac{1}{k}\sum_{j=1}^M  \frac { \sin (\pi\alpha_{j})} {\pi} H_{j} {\overline{ \chi_{j}}}^k
+o(k^{\min(\alpha_{1}-1,-1)})$$
 uniformly in $k$.
 \end{lemma}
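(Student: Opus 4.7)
The plan is to imitate the induction-on-$M$ framework used in the proof of Lemma \ref{PRELI}, but applied to the factored form
$$\frac{\tilde g}{\overline{\tilde g}}=\prod_{j=1}^M F_j,\qquad F_j=\Bigl(\frac{1-\overline{\chi_j}\chi}{1-\chi_j\overline{\chi}}\Bigr)^{\alpha_j},$$
so that $\tilde\gamma_{-k}$ is the $M$-fold convolution of the sequences $(\widehat{F_j}(n))_n$.

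\textbf{Base case $M=1$.} After the translation $\phi=\theta-\theta_j$, the principal branches of numerator and denominator combine on the fundamental domain $\phi\in(0,2\pi)$ to give the closed form $F_j(\theta)=e^{-i\pi\alpha_j}e^{i\alpha_j(\theta-\theta_j)}$, so a direct Fourier integral yields
$$\widehat{F_j}(n)=\chi_j^{-n}\,\frac{\sin(\pi\alpha_j)}{\pi(\alpha_j-n)}.$$
Specialising to $n=-k$ and expanding $1/(\alpha_j+k)$ recovers the $M=1$ version of the lemma with an $O(k^{-2})$ remainder.

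\textbf{Inductive step.} Decompose $\tilde g=\tilde g_M\cdot\tilde g_{M+1}$ as in Lemma \ref{PRELI}, with $\tilde g_{M+1}=(1-\overline{\chi_{M+1}}\chi)^{\alpha_{M+1}}$. Writing $\tilde\gamma_{-k}=\sum_u\widehat{\tilde g_M/\overline{\tilde g_M}}(u)\,\widehat{F_{M+1}}(-k-u)=S_1+S_2+S_3$ with the same threshold $k_0=k^\gamma$ and three-region split ($|u|\le k_0$, $k_0<|u|<k-k_0$, $|u|\ge k-k_0$), I would argue as follows. On $S_1$ the factor $\widehat{F_{M+1}}(-k-u)$ is uniformly close to its leading $\sin(\pi\alpha_{M+1})\chi_{M+1}^{k+u}/(\pi k)$ term, and the truncated partial sum of $\widehat{\tilde g_M/\overline{\tilde g_M}}(u)\chi_{M+1}^u$ Abel-converges to the pointwise value $(\tilde g_M/\overline{\tilde g_M})(\chi_{M+1})$, which is exactly $H_{M+1}$; this produces the new $j=M+1$ peak. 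On $S_3$ the roles are symmetric: the induction hypothesis supplies the $M$ old peaks $\overline{\chi_j}^u\sin(\pi\alpha_j)/(\pi u)$, and Abel-summing the bounded values of $F_{M+1}$ at each $\chi_j$ multiplies each old $H_j$ by the missing factor $\bigl((1-\overline{\chi_{M+1}}\chi_j)/(1-\chi_{M+1}\overline{\chi_j})\bigr)^{\alpha_{M+1}}$, which is precisely what updates $H_j$ from $M$ to $M+1$ factors. On the middle range $S_2$, the pointwise estimate $|\widehat{\tilde g_M/\overline{\tilde g_M}}(u)\,\widehat{F_{M+1}}(-k-u)|=O(|u|^{-1}|k+u|^{-1})$ integrates to $O(k^{-1}k_0^{-1}\log k)$, which is absorbed in the error provided $\gamma$ is chosen sufficiently close to $1$.

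\textbf{Main obstacle.} The delicate point, as already in Lemma \ref{PRELI}, is the calibration of $\gamma$: each $\widehat{F_j}$ decays only like $|n|^{-1}$, which is markedly slower than the $|n|^{\alpha_j-1}$ decay available in Lemma \ref{PRELI}, so the middle-range error budget is tight and the target threshold $o(k^{\min(\alpha_1-1,-1)})$ forces a careful bookkeeping of the $O(k^{-2})$ and Abel-tail remainders. A secondary technical issue is to collect several $\overline{\chi_j}^k$ peaks simultaneously when distinct indices $j$ share the same $\alpha_j$, and to ensure that all error bounds are genuinely uniform in $k$.
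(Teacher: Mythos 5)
Your proposal follows essentially the same route as the paper: factor $\tilde g/\overline{\tilde g}$, proceed by induction on the number of singularities, express $\tilde\gamma_{-k}$ as a convolution, split the sum at a threshold $k_0=k^\gamma$, Abel-sum near each peak to extract the boundary value of the remaining factor (giving the new $j=M+1$ peak and the updated $H_j$'s), and bound the middle and tail ranges via Abel summation and the $|n|^{-1}$ decay. Your closed-form Fourier integral for $\widehat{F_j}(n)=\chi_j^{-n}\sin(\pi\alpha_j)/(\pi(\alpha_j-n))$ matches the paper's asserted $\gamma_{2,k}$ up to a sign in the denominator (apparently a typo in the paper), and your identification of the $\gamma$-calibration and the $|n|^{-1}$ decay as the principal delicacy reflects the same bookkeeping the paper carries out.
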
 
\begin {proof}{of Lemma \ref{PRELI2}} 
In all this proof we denote respectively by $\gamma_{1,k}, \gamma_{2,k}$ the 
Fourier coefficient of order $k$ of $\displaystyle{ \prod_{j=1}^{M-1} \left( \frac{\overline{\chi_{h}}\chi-1}
 {\chi_{h}\overline \chi-1}\right)^{\alpha_{j}}}$
 and 
$\left(\frac{\chi\overline{\chi_{M}}-1)}{(\overline \chi  \chi_{M}-1)}\right)^{\alpha_{M}}$. Clearly 
$\gamma_{2,k} = (\bar\chi_{M})^k \frac{\sin \pi \alpha_{M}}{\pi} \frac{1}{k+\alpha_{M}}
=(\bar\chi_{M})^k \gamma_{3,k}.$
Assume $k \ge 0$ and $\gamma_{1,k} =\displaystyle{ \frac{1}{k}\sum_{j=1}^M  \frac { \sin (\pi\alpha_{j})} {\pi} H'_{j} {\overline{ \chi_{j}}}^k+o(\frac{1}{k}})$ with 
$H'_{j} = \displaystyle{ \prod_{j=1, h\neq j}^{M-1} \left( \frac{\overline{\chi_{h}}\chi_{j}-1}
 {\chi_{h}\overline{\chi_{j}}-1}\right)^{\alpha_{j}}}$.
Assume also $ k\ge 0$. We have
$ \displaystyle{
\gamma_{-k}= \sum_{v+u=-k} \gamma_{1,u} \gamma_{2,v}}.$
For  $k_{0}= k^\tau$, $0<\tau<1$ we can split this sum into
\begin {align*}
&\sum_{u< -k-k_{0}} \gamma_{1,u} \gamma_{2,-k-u} + \sum_{u=-k-k_{0}}^{-k+k_{0}} 
\gamma_{1,u} \gamma_{2,-k-u}
+ \sum_{u=-k+k_{0}+1}^{-k_{0}-1} \gamma_{1,u} \gamma_{2,-k-u} \\
&+\sum_{u= -k_{0}}^{k_{0}} \gamma_{1,u} \gamma_{2,k-u} 
+\sum_{u>k_{0}} \gamma_{1,u} \gamma_{2,-k-u}.
\end{align*}
Write
$$\sum_{u=-k_{0}}^{k_{0}} \gamma_{1,u} \gamma_{2,-k-u} 
= \sum_{u=-k_{0}}^{k_{0}} \gamma_{1,u} 
(\bar \chi_{M})^{k+u}  (\gamma_{3,-k-u} - \gamma_{3,-k}+ \gamma_{3,-k}).$$
Since
\begin{equation} \label{UNIF1}
\sum_{u=- k_{0}}^{k_{0}} \gamma_{1,u} (\bar \chi_{M})^{k+u}  ( \gamma_{3,-k-u} -\gamma_{3,-k})
= \frac{\sin (\pi \alpha)}{\pi}\sum_{u=- k_{0}}^{k_{0}} \gamma_{1,u} (\bar \chi_{M})^{k+u} \frac{-u}{(k+u+\alpha)(k+\alpha)}
\end{equation}
it follows that (always with the appendix)
\begin{align*}
\sum_{u=-k_{0}}^{k_{0}} \gamma_{1,u} \gamma_{2,-k-u} &=  \gamma_{3,-k}  \sum_{u=-k_{0}}^{k_{0}}
\gamma_{1,u} (\bar \chi_{M})^{-k-u} + O(k_{0} k^{-2})\\
&=   \gamma_{3,-k}  (\chi_{M})^k \sum_{\vert u\vert \ge k_{0} }  \gamma_{1u} \chi_{M}^{u}
+O(k_{0} k^{-2})\\
&= \gamma_{3,-k}  (\chi_{M})^k \prod_{j=1}^{M-1} \left( \frac{\overline{\chi_{h}}\chi_{M}-1}
 {\chi_{h}\overline {\chi_{M}}-1}\right)^{\alpha_{j}}
 +
O\left((k_{0}k)^{-1}\right) +O(k_{0} k^{-2})\\
&= \gamma_{3,-k}  (\chi_{M})^k \prod_{j=1}^{M-1} \left( \frac{\overline{\chi_{h}}\chi_{M}-1}
 {\chi_{h}\overline {\chi_{M}}-1}\right)^{\alpha_{j}}
 +
O(k^{\tau-2}).
\end{align*}
In the same way we have 
$$
\sum_{u=-k -k_{0}}^{-k+k_{0}} \gamma_{1,u} \gamma_{2,k-u} =
  \sum_{j=1}^M \frac{\sin \pi \alpha_{j}}{\pi} H'_{j} {\overline{\chi_{j}}}^k 
 \left( \frac{\overline{\chi_{M}} \chi_{j}-1} {\chi_{M}\overline{\chi_{j}} -1}\right)^{\alpha_{M}}
O(k^{\tau-2}).
$$
Now using the appendix it is easy to see that 
\begin{equation}\label{UNIF2}
 \sum_{u< -k-k_0} \gamma_{1,u} \gamma_{2,-k-u} \le M_{1} (k_{0}k)^{-1}
 \end{equation}
\begin{equation}\label{UNIF3}
\sum_{u> k_{0}} \gamma_{1,u} \gamma_{2,-k-u} \le M_{2}(k_{0}k)^{-1}
\end{equation}
with $M_{1}$ and $M_{2}$ no depending from $k$.
For the sum  $S=\displaystyle{\sum_{u=-k+k_{0}+1}^{-k_{0}-1} \gamma_{1,u} \gamma_{2,-k-u}} $
we remark, using an Abel summation, that 
$$\vert S\vert  \le M_{3} (k_{0}k)^{-1}  + \sum_{u=-k+k_{0}+1}^{-k_{0}-1} 
\Bigl \vert \frac{ 1}{(u+\alpha)(k-u+\alpha)} - \frac{1}{(u+1+\alpha)(k-u-1+\alpha)}\Bigr \vert $$
with $M_{3}$ no depending from $k$.
Consequently
\begin{equation} \label{UNIF4}
 \vert S \vert  \le M_{3}(k_{0}k)^{-1} + \sum_{u=-k+k_{0}+1}^{-k_{0}-1} \frac{k-2u} {(k-u)^2 u^2}.
 \end{equation}
Then Euler and Mac-Laurin formula provides the upper bound 
$$ \vert S \vert \le O\left( (k_{0}k)^{-1}\right) + \int_{-k+k_{0}+1}^{-k_{0}-1}\frac{k-2u} {(k-u)^2 u^2} du . $$
Since
$$ \int_{-k+k_{0}+1}^{-k_{0}-1}\frac{k-2u} {(k-u)^2 u^2} du \le \frac{3 k}{(k+k_{0})^2} 
\int_{-k+k_{0}+1}^{-k_{0}-1}\frac{1}{u^2} du$$
we get finally
$$ \sum_{u<-k+k_{0}+1}^{-k_0-1} \gamma_{1,u} \gamma_{2,k-u} = O\left((k_{0}k)^{-1}\right)$$
and
$$\tilde \gamma_{-k} 
= \frac{1}{k}\sum_{j=1}^M  \frac { \sin (\pi\alpha_{j})} {\pi} H_{j} {\overline{ \chi_{j}}}^k+ O\left( (k_{0}k)^{-1}\right) +O(k^{\alpha-2}).$$
Then with a good choice of $\tau$ we obtain the expected formula. The uniformity is a direct consequence of the equations (\ref{UNIF1}), (\ref{UNIF2}), (\ref{UNIF3}), (\ref{UNIF4}).
\end{proof}
The rest of the proof  of Lemma \ref{PRELI2} can be treated as the end of the proof of property \ref{PROP1}.

\subsection {Expression of $\left(T_{N}^{-1} \left( f\right)\right)_{k+1,1}$.}
First we have to prove the next lemma 
\begin{lemma}\label{INVERS3}
For $\alpha \in ]-\frac{1}{2}, \frac{1}{2}[$ we have a function 
$F_{N,\alpha}\in C^1[0 ,\delta ]$ for all $\delta \in ]0,1[$, satisfying the properties  
\begin{itemize}
\item [i)]
$$Ê\forall z \in [0,\delta [ \quad \vert F_{N,\alpha}(z)\vert \le K_{0}(1+\vert\ln (1-z+\frac{1+\alpha}{N})\vert )$$
where $K_{0}$ is a constant no depending from $N$.
\item [ii)]
%For $\delta \in[0,1[$ we have two constants $K_{1}(\delta )$ and $K_{2}(\delta )$ no depending  from 
%$N$ such that for all $z, z'$ in 
$F_N$ and $F'_N$ have a modulus of continuity 
no depending from $N$.
\item [iii)]
with the notations of Theorem \ref{COEF} we have 
\begin{align*}
& \left(T_{N}^{-1} \left(f\right)\right)_{k+1,1}
=
\\ &=  \beta_{k} -\frac{1}{N} \sum_{u=0}^k \beta_{k-u} \left( \sum_{j=1}^M F_{N,\alpha_{j}} (\frac{u}{N})
 {\overline{\chi_{j}}}^{u}\right) +R_{N,\alpha_{1}}
\end{align*}
uniformly in $k$, $0\le k\le N$, with 
$$R_{N,\alpha_{1}} = o\left(N^{-1} \sum_{u=0}^k  \beta_{k-u} \left( \sum_{j=1}^M F_{N,\alpha_{j}} (\frac{u}{N})
 {\overline{\chi_{j}}}^{}u\right)\right) \quad \mathrm{if} \quad \alpha>0$$
and
$$R_{N,\alpha_{1}} = o\left(N^{\alpha_{\alpha_{1}}-1} \sum_{u=0}^k \ \beta_{k-u} \left( \sum_{j=1}^M 
F_{N,\alpha_{j}} (\frac{u}{N})
 {\overline{\chi_{j}}}^u\right)\right)\quad \mathrm{if} \quad \alpha<0$$
\end{itemize}
\end{lemma}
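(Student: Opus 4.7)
The starting point is the identity
\[
(T_N(f))^{-1}_{1,k+1} = \beta_k - \sum_{u=0}^k \beta_{k-u}\, H_N(u),
\]
which is formula (\ref{STAR}). The whole task is to rearrange the nested series defining $H_N(u)$ into the form $N^{-1}\sum_{j=1}^M F_{N,\alpha_j}(u/N)\,\overline{\chi_j}^{\,u}$ plus an acceptable error, thereby simultaneously defining $F_{N,\alpha}$ and verifying (iii). The analytic engine is Proposition \ref{prop2}, which supplies $\gamma_{-k}\sim k^{-1}\sum_j d_j\,\overline{\chi_j}^{\,k}$ with $d_j=\frac{\sin\pi\alpha_j}{\pi}H_j\,c_1(\chi_j)/\overline{c_1(\chi_j)}$, and the corresponding asymptotic for the conjugate coefficients.

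I would plug this expansion into each of the $2m+1$ factors $\gamma_{\pm(N+1+n_i+n_{i\pm 1})}$ appearing in $H_N(u)$ and expand. This produces a sum indexed by tuples $(j_0,\ldots,j_{2m})\in\{1,\ldots,M\}^{2m+1}$ whose accumulated phase is of the form $\chi_{j_0}^{N+1+n_0}\overline{\chi_{j_1}}^{N+1+n_0+n_1}\cdots\overline{\chi_{j_{2m}}}^{u-(N+1+n_{2m})}$. Summing the geometric series in each $n_i$, the off-diagonal tuples (those for which two consecutive indices differ) yield bounded factors of order $|1-\chi_{j_i}\overline{\chi_{j_{i\pm 1}}}|^{-1}$ and are strictly subdominant, whereas the diagonal tuples $j_0=\cdots=j_{2m}=j$ telescope to the single phase $\overline{\chi_j}^{\,u}$. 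After rescaling $n_i=Nt_i$, the surviving $(2m+1)$-fold sum of $\prod(N+1+n_i+n_{i+1})^{-1}$ becomes a Riemann sum for an iterated integral on $[0,\infty)^{2m+1}$, and summing in $m\ge 0$ yields a convergent series (because $|\alpha_j|<\tfrac12$ makes each ``step'' contractive, compatibly with the $L^2$-convergence of the Neumann series in Lemma \ref{INVERS}) whose total, divided by $N$, defines $F_{N,\alpha_j}(u/N)$. The logarithmic bound (i) arises from the last integration being singular as $u/N\to 1^-$, cut off at distance $(1+\alpha)/N$; the regularity (ii) follows from the $C^1$-smoothness of the iterated integrals on $[0,\delta]$ together with Euler--Maclaurin estimates uniform in $m$.

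The main obstacle will be combining three layers of uniform control. The $o$-errors in Proposition \ref{prop2} must be propagated through the $2m+1$ nested summations without breaking summability in $m$; the off-diagonal cancellation must be made quantitative with constants depending only on the spectral gaps $\min_{j\neq j'}|1-\chi_j\overline{\chi_{j'}}|$; and the Riemann-sum-to-integral approximation must hold uniformly in $u/N\in[0,\delta]$ with a rate independent of $m$. The dichotomy $\alpha_1>0$ versus $\alpha_1<0$ in the remainder $R_{N,\alpha_1}$ stems, via Proposition \ref{PROP1}, from whether the convolution $\sum_u \beta_{k-u}(\cdots)$ is dominated by its boundary contributions (giving the $N^{-1}$ rate) or by its bulk (giving the $N^{\alpha_1-1}$ rate).
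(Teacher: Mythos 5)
Your plan follows the same architecture as the paper's proof: start from the inversion identity (\ref{STAR}), insert the asymptotic expansion of $\gamma_{-k}$ from Proposition \ref{prop2} into the nested sums $H_{p,N}(u)$, separate the "diagonal" tuples $j_0=\cdots=j_{2m}$ (which accumulate the phase $\overline{\chi_j}^{\,u}$ and the weight $(\sin\pi\alpha_j/\pi)^{2p+2}$) from the "off-diagonal" ones (killed by Abel summation), identify the surviving nested kernel sum with $F_{p,N}(u/N)$, and recover $F_{N,\alpha}$ as the geometric series $\sum_p (\sin\pi\alpha/\pi)^{2p}F_{p,N}$. This is exactly the paper's decomposition $S_{2p}=S_{2p,0}+S_{2p,1}+R_{2p,\alpha_1}$, and the properties (i)--(ii) are as you suspect a matter of controlling the last resolvent factor $\bigl(1+\tfrac{1}{N}+\tfrac{n_{2p}}{N}-z\bigr)^{-1}$.

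Two points where your account deviates from what actually happens. First, you attribute the dichotomy in $R_{N,\alpha_1}$ between the $o(N^{-1}\cdots)$ and $o(N^{\alpha_1-1}\cdots)$ forms to Proposition \ref{PROP1} and to a boundary-versus-bulk splitting of the convolution $\sum_u\beta_{k-u}(\cdots)$. That mechanism is the right one for Lemma \ref{final} later, but here the dichotomy enters one layer earlier: it comes directly from the error exponent $\min(\alpha_1-1,-1)$ in Proposition \ref{prop2} for the Fourier coefficients $\gamma_{-k}$, which propagates multiplicatively through the $2p+1$ factors as $r_{p,\alpha_1}=o(1)$ when $\alpha_1>0$ and $r_{p,\alpha_1}=o(N^{\alpha_1})$ when $\alpha_1<0$, before any convolution with $\beta_{k-u}$ is performed. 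Second, for (i) and (ii) you propose reproving the structure of $F_{N,\alpha}$ from scratch by rescaling $n_i=Nt_i$ and passing to iterated integrals with Euler--Maclaurin control; the paper instead imports the existence, continuity, logarithmic bound $|F_{N,\alpha}(z)|\le K_0\bigl(1+|\ln(1-z+\tfrac{1+\alpha}{N})|\bigr)$ and uniform convergence of $\sum_p (\sin\pi\alpha/\pi)^{2p}F_{p,N}$ from Lemma~4 of \cite{RS10}, and only supplements this with the two short inequality chains (\ref{unifcont1})--(\ref{unifcont2}) giving the $N$-independent Lipschitz modulus of $F_{N,\alpha}$ and $F'_{N,\alpha}$. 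Your route would work but duplicates work that the paper deliberately delegates; if you do take it, be careful that the summability in $p$ requires $\sin^2\pi\alpha/\pi^2<1$ uniformly together with the operator-norm bound on $H^*_{\Phi_N}H_{\Phi_N}$, not just $|\alpha_j|<\tfrac12$ pointwise.
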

\begin{remark} \label{REMARQUE2} 

\textbf{(Proof of the corollary \ref{COEF2})}
for $\frac{k}{N}\rightarrow 0$ 
Lemma \ref{INVERS3} and the continuity of the function $F_{\alpha}$ provide 

$$  \left(T_{N}^{-1} \left(f\right)\right)_{k,1}
= \beta_{k} + \frac{1}{N} \sum_{u=0}^k \beta_{k-u} \left(\sum_{j=0}^M F_{N,\alpha_{j}} (0) {\overline{\chi_{j}}}^u \right)\left(1+o(1)\right).
$$
Since $F_{N,\alpha} (0)= \alpha^2+o(1)$ (see \cite{RS10}) the hypothesis $ \beta_{0} =1$
and the formula (\ref{STAR}) 
imply the corollary.
\end{remark}
\begin{proof}{of the lemma \ref{INVERS3}}
%%%%%
%%%%%
As for \cite{RS10} and using the inversion formula 
and Corollary  \ref{INVERS2} we have to consider the sums 
\begin{align*}
H_{p,N}(u) &= \left (  
\sum_{n_{0}=0}^\infty \gamma_{-(N+1+n_{0}) }
\sum_{n_{1}=0}^\infty \overline{\gamma_{-(N+1+n_{1}+n_{0}) }}
\sum_{n_{2}=0}^\infty \gamma_{-(N+1+n_{1}+n_{2}) }\right.
\times \cdots 
\\
&\times \left.\sum_{n_{2m-1}=0}^\infty \overline{ \gamma_{-(N+1+n_{2p-2}+n_{2p-1}) }}
\sum_{n_{2p}=0}^\infty \gamma_{-(N+1+n_{2m-1}+n_{2m}) }
\overline{\gamma_{u-(N+1+n_{2p})}}\right).
\end{align*}
If 
$$ S_{2p} = \sum_{n_{2p}=0}^\infty \gamma_{-(N+1+n_{2p-1}+n_{2p}) }
\overline{\gamma_{u-(N+1+n_{2p})}}$$
 we can write, following the previous Lemma, 
 $ S_{2p} = S_{2p,0}+ S_{2p,1} +R_{2p,\alpha_{1}}$ with 
 \begin{align*}
 S_{2p,0} &=  
  \sum_{n_{2p}=0}^\infty
\left(   \sum_{j=0}^M \left( \frac{\sin { \pi \alpha_{j}}}{\pi}\right)^2 {\overline{ \chi_{j}}}^{n_{2p-1}+u} \right.\\
& \left. \frac {1} {N+1+n_{2p-1}+n_{2p}+\alpha_{j}}\frac{1}{N+1+n_{2p}-u+\alpha_{j}}\right)
\end{align*}
  \begin{align*}
 S_{2p,1} &=  
  \sum_{n_{2p}=0}^\infty \left(   \sum_{j j'=0 j\neq j'}^M H_{j} \overline{H(j')}  \frac{\sin { \pi \alpha_{j}}}{\pi} \frac{\sin { \pi \alpha_{j'}}}{\pi}  \frac{ c_{1}(\chi^j)}{\overline{c_{1}(\chi^j)}} 
   \frac{ c_{1}(\chi^{j^\prime})}{\overline{c_{1}(\chi^{j^\prime})}}\right.\\  
& \left. {\overline{ \chi_{j}}}^{N+1+n_{2p}+n_{2p-1}} \chi_{j'}^{N+1+n_{2p}-u} \frac {1} {N+1+n_{2p-1}+n_{2p}+\alpha_{j}}\frac{1}{N+1+n_{2p}-u+\alpha_{j'}}\right)
\end{align*}
   Let us study the order of $S_{2p,1}.$
To do this we have to evaluate the order of the expression
$$ \sum_{j=0}^H \chi_{0}^{j} \frac {1} {N+1+n_{2m-1}+j+\alpha}\frac{1}{N+1+j-u+\alpha} $$
where $H$ goes to the infinity and $N=o(H)$.
As for the previous proofs it is clear that this sum is bounded by 
$$\sum_{j=0}^M \Bigl \vert \frac{1} {N+2+n_{2p-1}+j}\frac{1}{N+2+j-u} 
-\frac{1} {N+1+n_{2p-1}+j}\frac{1}{N+1+j-u}.\Bigr \vert $$
Obviously
\begin{align*}
&\Bigl \vert \frac{1} {N+2+n_{2p-1}+j}\frac{1}{N+2+j-u} 
-\frac{1} {N+1+n_{2p-1}+j}\frac{1}{N+1+j-u}\Bigr \vert  \\
&\le \Bigl \vert \frac{2N+2 +2 j+n_{2p-1}-u} { (N+1+n_{2p-1}+j)^2(N+1+j-u)^2} \Bigr \vert 
\end{align*}
and 
\begin{align*}
&\Bigl \vert \frac{2N+2+2 j+n_{2p-1}-u} { (N+1+n_{2p-1}+j)^2(N+1+j-u)^2} \Bigr \vert\\
&= \Bigl \vert \frac{1}{N+1+j +n_{2p-1}}+\frac{1}{N+1+j -u}\Bigr \vert 
\frac{1}{(N+1+j +n_{2p-1}) (N+1+j -u)} \\
&\le  \frac{1}{N} \frac{1}{(N+1+j +n_{2p-1}) (N+1+j -u)}.
\end{align*}
In the other hand we have, for $\alpha_{1} \in ]0, \frac{1}{2}[$ 
$$ R_{2p,\alpha_{1}} = o\left( \sum_{j=0}^\infty \frac{1}{N+1+n_{2p-1}+n_{2p}}
\frac{1}{N+1+n_{2p}-u}\right)$$
and for $\alpha_{1}\in]-\frac{1}{2},0[.$
$$ 
R_{2p,\alpha_{1}}= o\left(N^{\alpha_{1}} \sum_{j=0}^\infty \frac{1}{N+1+n_{2p-1}+n_{2p}}
\frac{1}{N+1+n_{2p}-u}\right).
$$

Hence we can write 
 $$ S_{2p}= S'_{2p} \left( \sum_{j=0}^M \frac{\sin \pi\alpha_{j}}{\pi} ^2 {\overline {\chi_{j}}}^{n_{2p-1}+u} +r_{m}\right),$$
 with 
 $$ S'_{2p}= \sum_{j=0}^{+ \infty} \frac{1} {N+1+n_{2m-1}+n_{2m}}\frac{1}{N+1+n_{2m}-u}.$$
 and 
 \[ \left\{ 
\begin{array}{cccc}
  r_{m,\alpha_{1}}  =  &  o(1)            &\mathrm{if} &\quad  \alpha\in]0, \frac{1}{2}[\\
   r_{m,\alpha_{1}}  =  &  o(N^{\alpha_{1}})&\mathrm{if} &\quad \alpha\in]- \frac{1}{2},0[. 
   \end{array}
\right.
\]
  For  $z\in [0,1]$ we define $F_{p,N}(z)$ by 
\begin{align*}
F_{p,N}(z) =& \sum_{n_{0}=0}^\infty \frac{1}{N+1+n_{0}} \sum_{n_{1}=0}^{\infty} \frac{1}{N+1+w_{1}+w_{0}}
\times \cdots \\
\times & \sum_{n_{2p-1}=0}^\infty \frac{1}{N+1+n_{2p-2}+n_{2p-1}} \\
\times& \sum_{n_{2p}=0}^\infty \frac{1}{N+1+n_{2p-1}+n_{2p}}
\frac{1}{1+\frac{1}{N}+\frac{n_{2p}}{N}-z}.
\end{align*}

 Repeating the same  idea as previously for the sums on $n_{2m-1}, \cdots, n_{0}$ we finally obtain
$$H_{p,N} (u) = \frac{1}{N} \left( \sum_{j=0}^M \left(\frac{\sin(\pi \alpha^j)}{\pi} \right)^{2p+2}
 {\overline{ \chi_{j}}}^{u}\right) 
F_{m,N} (\frac{u}{N})+R_{N,\alpha_{1}}).
$$
with $R_{N,\alpha_{1}}$ as announced previously.\\
 For all $\alpha\in ]-\frac{1}{2}, \frac{1}{2}[$ we established in \cite{RS10} the continuity of the function $F_{p,N}$ and  the 
uniform convergence in $[0,1]$ of the sequence
$\displaystyle{\sum_{p=0}^\infty 
\left(\frac{\sin(\pi \alpha)}{\pi}\right)^{2p} F_{p,N}(z)}$.
For $\alpha\in ]-\frac{1}{2}, \frac{1}{2}[$ let us denote by $F_{N,\alpha}  (z)$ the sum
$\displaystyle{ \sum_{m=0}^{+\infty} \left( \frac{\sin \pi \alpha}{\pi}\right) ^{2 m} F_{m,N}(z)}$.
The function $F_{N,\alpha}$ is defined, continuous and derivable on $[0,1[$ (see \cite{RS10} Lemma 4). Moreover for all $z\in [0,\delta ]$ , $0<\delta <1$ we have the upper bounds
$$ \frac{1}{1+\frac{1}{N} +\frac{n_{2p}}{N} -z} \le \frac{1}{1+\frac{1}{N}  -\delta }.$$ 
Hence 
$$\left( \frac{1+\frac{1}{N}-\delta } {1+\frac{1}{N} +\frac{n_{2p}}{N} -z} \right)^2 \le
 \frac{1+\frac{1}{N}-\delta }{1+\frac{1}{N} +\frac{n_{2p}}{N} -z }$$ 
and 
$$\left(\frac{1 } {1+\frac{1}{N} +\frac{n_{2p}}{N} -z} \right)^2 \le
 \frac{1} {1+\frac{1}{N}-\delta } \frac{1}{1+\frac{1}{N} +\frac{n_{2p}}{N} -z}.$$
These last inequalities and the proof of Lemma 4 in \cite{RS10} provide that
$F_{N,\alpha}$ is in $C^1[0,1[$.

Always in \cite{RS10} we have obtained that, for all $z$ in $[0,1]$,
\begin{equation}\label{F}
\Bigl \vert   F_{N,\alpha}(z) \Bigr \vert \le K_{0} \left( 1+ \Bigr \vert \ln ( 1-z+\frac{1+\alpha}{N})\Bigl \vert \right)
\end{equation}
where $K_{0}$ is a constant no depending from $N$.\\
 Now we have to prove the point ii) of the statement. For $z,z' \in [0,\delta]$ 
 \begin{align*}
 &\Bigl \vert \frac{ z-z'}{ (1+\frac{1+\alpha}{N} +
 \frac{n_{2m}}{N} -z) (1+\frac{1+\alpha}{N} +
 \frac{n_{2m}}{N} -z')}\Bigr\vert \\
& \le  \frac{\vert z-z'\vert }{ 1-\delta} 
\frac{1}{ 1+\frac{1+\alpha}{N} +
 \frac{n_{2m}}{N} -\delta}
 \end{align*}
 that implies, with (\ref{F})
\begin{equation}\label{unifcont1}
\vert F_{N,\alpha} (z) -F_{N,\alpha} (z')Ê\vert 
 \le \vert z-z'\vert \frac{ K_0 \left ( 1+ \Bigl \vert
 \ln (1-\delta +\frac{1+\alpha}{N})\Bigr \vert \right)}
 {1-\delta}.
 \end{equation}
 In the same way we have 
 \begin{align*}
 &\vert z-z'\vert \Bigl \vert \frac{ ((1+\frac{1+\alpha}{N} +
 \frac{n_{2m}}{N} -z)+ (1+\frac{1+\alpha}{N} +
 \frac{n_{2m}}{N} -z')}{ (1+\frac{1+\alpha}{N} +
 \frac{n_{2m}}{N} -z)^2 (1+\frac{1+\alpha}{N} +
 \frac{n_{2m}}{N} -z')^2}\Bigr\vert \\
& \le 2 \vert z-z'\vert 
  \frac{1}{ (1-\delta)^2}  
\frac{1}{ 1+\frac{1+\alpha}{N} +
 \frac{n_{2m}}{N} -\delta}
 \end{align*}
and 
 always with the inequality (\ref{F})
\begin{equation}\label{unifcont2}
\vert F'_{N,\alpha} (z) -F'_{N,\alpha} (z')Ê\vert 
 \le2 \vert z-z'\vert \frac{ K_0 \left ( 1+ \Bigl \vert
 \ln (1-\delta +\frac{1+\alpha}{N})\Bigr \vert \right)}
 {(1-\delta)^2}.
 \end{equation}
Using  (\ref{unifcont1}) and (\ref{unifcont2})
we get the point $ii)$.\\
 To achieve the proof we have  to remark that the uniformity in $k$ in the point $iii)$ is a direct consequence of Property 2. 
\end{proof}
We have now to state the following lemma.  
\begin{lemma}\label{final}
For $\frac{k}{N}\rightarrow x$, $0<x<1$ we have, with the notations of Theorem \ref{COEF},
$$ \sum_{u=0}^k \beta_{k-u} \left( \sum_{j=1}^M F_{\alpha_{j},N} (\frac{u}{N})
 {\overline{\chi_{j}}}^{u}\right)
= \left(\sum_{j=1}^m  {\overline{\chi_{j}}}^u  c_{1}^{-1} (\chi_{j}) K_{j}\right) 
\sum_{u=0}^k \beta_{k-u}^{(\alpha_{1})} F_{N\alpha_{1}} (\frac{u}{N})+o(k^{\alpha_{1}-1}),$$
uniformly in $k$ for $x$ in all compact of $]0,1[$ and for $K_{j}$ as in Property \ref{PROP1}.
\end{lemma}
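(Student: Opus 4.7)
{plan for Lemma \ref{final}}
The plan is to substitute the asymptotic expansion of $\beta_{k-u}$ coming from Proposition \ref{PROP1} into the left-hand side, and then show that only the ``diagonal'' pairs contribute to leading order, the remaining terms being killed by summation by parts thanks to the oscillating factors $(\chi_j\overline{\chi_{j'}})^u$.

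First I would split the range $u\in[0,k]$ into a small-$(k-u)$ piece (say $k-u \le k^{\nu}$ with $0<\nu<1$) and a large-$(k-u)$ piece. On the small piece, crude bounds on $\beta_{k-u}$ together with the logarithmic bound $|F_{N,\alpha}(u/N)|\le K_0(1+|\ln(1-u/N+(1+\alpha)/N)|)$ given by Lemma \ref{INVERS3}\,i) show that this contribution is $o(k^{\alpha_1-1})$ uniformly for $k/N$ in a compact subset of $]0,1[$. On the large piece, Proposition \ref{PROP1} gives
\[
\beta_{k-u}=\sum_{j=1}^{m} K_j\,c_1^{-1}(\chi_j)\,\overline{\chi_j}^{\,k-u}\,\beta^{(\alpha_1)}_{k-u}
+o\bigl((k-u)^{\tau_1-1}\bigr),
\]
using that $\beta^{(\alpha_1)}_{k-u}\sim (k-u)^{\alpha_1-1}/\Gamma(\alpha_1)$ from (\ref{ZYG}). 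The $o$-remainder, multiplied by $\sum_{j'=1}^{M} F_{\alpha_{j'},N}(u/N)\overline{\chi_{j'}}^{\,u}$ and summed, is again $o(k^{\alpha_1-1})$ by the same logarithmic bound on $F_{N,\alpha}$.

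After this substitution the left-hand side becomes the double sum
\[
\sum_{j=1}^{m}\sum_{j'=1}^{M} K_j\,c_1^{-1}(\chi_j)\,\overline{\chi_j}^{\,k}
\sum_{u=0}^{k}(\chi_j\overline{\chi_{j'}})^{u}\,\beta^{(\alpha_1)}_{k-u}\,F_{\alpha_{j'},N}(u/N)
+o(k^{\alpha_1-1}).
\]
The diagonal contribution, namely $j=j'\in\{1,\ldots,m\}$ (which forces $\chi_j\overline{\chi_{j'}}=1$ and $\alpha_{j'}=\alpha_1$, so $F_{\alpha_{j'},N}=F_{\alpha_1,N}$), yields exactly the announced main term. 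I would then show that every off-diagonal pair $(j,j')$ with $\chi_j\neq\chi_{j'}$ contributes $o(k^{\alpha_1-1})$. This is where I apply summation by parts: writing $S_u^{(j,j')}=\sum_{v=0}^{u}(\chi_j\overline{\chi_{j'}})^{v}$, one has $|S_u^{(j,j')}|\le 2/|\chi_j\overline{\chi_{j'}}-1|$, a constant independent of $u$, $k$ and $N$. An Abel transform then bounds the inner sum by the total variation in $u$ of the product $\beta^{(\alpha_1)}_{k-u}\,F_{\alpha_{j'},N}(u/N)$, which by Lemma \ref{INVERS3}\,ii) and the monotonicity/decay of $\beta^{(\alpha_1)}_{k-u}$ is controlled by $O(k^{\alpha_1-1})\cdot o(1)$ uniformly for $k/N$ in a compact of $]0,1[$, the $o(1)$ coming from the gain $1/k$ produced either by the derivative bound on $F_{N,\alpha_{j'}}$ on $[0,\delta]$ or by the regularity of $\beta^{(\alpha_1)}_{k-u}$.

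The main obstacle is executing this Abel summation cleanly, because $F_{N,\alpha_{j'}}(u/N)$ is not uniformly bounded on the full interval $[0,1]$ (it grows like $|\ln(1-u/N+(1+\alpha_{j'})/N)|$ near the endpoint) and $\beta^{(\alpha_1)}_{k-u}$ has a power singularity as $u\to k$. The restriction $k/N\in[\delta_0,\delta_1]\subset ]0,1[$ allows me to separate the near-endpoint region $k-u\le k^\nu$ (handled directly by the logarithmic bound in Lemma \ref{INVERS3}\,i)) from the bulk region where $F_{N,\alpha_{j'}}$ is $C^1$ with modulus of continuity uniform in $N$. In the bulk region the summation-by-parts argument yields a factor $O(|\chi_j\overline{\chi_{j'}}-1|^{-1}\,k^{\alpha_1-2}\,k)=O(k^{\alpha_1-1})\cdot o(1)$, the $o(1)$ being quantitative in $k^{\nu-1}$ times the total variation of $F_{\alpha_{j'},N}$. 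Combining both regions and summing over the finitely many off-diagonal pairs completes the proof, uniformity in $k$ being inherited from the uniform estimates of Propositions \ref{PROP1}--\ref{prop2} and Lemma \ref{INVERS3}.
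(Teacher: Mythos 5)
Your plan follows the paper's overall strategy: split the sum at a cutoff $k_0 \sim k^\nu$, substitute Proposition~\ref{PROP1} on the bulk range, keep the diagonal term $j=j'$, and kill the off-diagonal contributions by Abel summation using the bounded partial sums $|S_u^{(j,j')}|\le 2/|\chi_j\overline{\chi_{j'}}-1|$ together with the modulus-of-continuity bound from Lemma~\ref{INVERS3}~ii). That part matches what the paper does.

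The genuine gap is in your disposal of the small-$(k-u)$ piece. You claim that for $k-u\le k^\nu$ the ``crude bound'' (boundedness of $F_{N,\alpha_j}(u/N)$ for $u/N$ in a compact of $[0,1)$ together with a bound on $\sum_{v\le k^\nu}|\beta_v|$) makes this contribution negligible. That works for $\alpha_1>0$, where $\sum_{v\le k^\nu}|\beta_v| = O(k^{\nu\alpha_1}) = o(k^{\alpha_1})$. But for $\alpha_1\in(-\tfrac12,0)$, Proposition~\ref{PROP1} gives $|\beta_v| = O(v^{\alpha_1-1})$ with $\alpha_1-1<-1$, so $\sum_{v\le k^\nu}|\beta_v|$ is bounded, i.e.\ $O(1)$, while $k^{\alpha_1}\to 0$: the small piece does \emph{not} tend to zero, and the corresponding small piece of the right-hand side is also $O(1)$. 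You cannot simply discard either one; you must show that they agree up to $o(k^{\alpha_1})$. The paper therefore runs a separate argument for $\alpha_1<0$: it uses the $C^1$ regularity of $F_{N,\alpha_h}$ on $[0,\delta]$ to replace $F_{N,\alpha_h}(u/N)$ by the constant $F_{N,\alpha_h}(k/N)$ on $u\in[k-k_0,k]$ (the error being $O(k_0/N)=o(k^{\alpha_1})$), then invokes the identity $\sum_{v=0}^\infty\beta_v\chi_h^v = g^{-1}(\chi_h)=0$ (which holds precisely because $\alpha_h<0$ makes $\chi_h$ a zero of $g^{-1}$) to convert the head sum $\sum_{v\le k_0}$ into the tail $-\sum_{v>k_0}\beta_v\chi_h^v$, which is then estimated by Abel summation and matched term-by-term against the tail of the right-hand side. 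This case distinction, and in particular the use of the vanishing of $g^{-1}$ at $\chi_h$, is essential and is missing from your plan.

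A secondary issue: for $\alpha_1<0$ the cutoff exponent cannot be an arbitrary $\nu\in(0,1)$; the Abel-summation error on the bulk range involves the total variation of $\beta^{(\alpha_1)}_{k-u}$, which is of order $k_0^{\alpha_1-1}$, so one must impose a lower bound on $\nu$ (the paper takes $k_0=N^\gamma$ with $\gamma>\max\bigl(\tfrac{\alpha_1}{\tau_1},\tfrac{-\alpha_1}{1-\alpha_1}\bigr)$). Without this constraint the claimed $o(1)$ gain in your bulk-region Abel estimate does not hold.
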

\begin{remark}
This Lemma and Lemma \ref{INVERS3}  imply  the equality
$$
T_{N} ^{-1} \left(f \right)_{k+1,1} = \left(\sum_{j=1}^m  {\overline{\chi_{j}}}^u  c_{1}^{-1} (\chi_{j}) K_{j}\right)
T_{N} ^{-1} \left( \vert 1-\chi\vert^{2\alpha_{1}}\right)_{k+1,1} +o(k^{\alpha_{1}-1})
$$
with (see \cite{RS10} Lemma 3)
$$T_{N} ^{-1} \left( \vert 1-\chi\vert ^{2\alpha_{1}} \right)_{k+1,1}= \left( \beta_{k} ^{(\alpha)} - \frac{1}{N} \sum_{u=0}^k  \beta_{k-u}^{(\alpha_{1})} F_{N,\alpha_{1}}(\frac{u}{N})\right).
$$
\end{remark}
\begin{proof}{of lemma \ref{final}}
%We have to consider the case 
%$\alpha\in ]0,\frac{1}{2}[$ and the case 
%$ \alpha \in ]-\frac{1}{2}, 0[$. \\
%\textbf{ Case $\alpha\in ]0,\frac{1}{2}[$}\\
With our notation assume $x \in [0,\delta]$, $0<\delta <1$.
Put 
$k_0 =N^\gamma$ with $\gamma\in ]\max(\frac{\alpha_{1}}{\tau_{1}}, \frac{-\alpha_{1}}{1-\alpha_{1}}),1[$ if $\alpha_{1}<0$, and 
 $\gamma\in ]0,1[$ if $\alpha_{1}>0$. For all integer $h$, $0\le h \le M$ we can splite the sum 
$\displaystyle{
  \sum_{u=0}^k  \beta_{k-u} F_{N,\alpha_{h}} (\frac{u}{N})\overline{\chi_{h}}^u}$ into 
$S=\displaystyle{ \sum_{u=k-k_0}^{k} \beta_{k-u} F_{N,\alpha_{h}} (\frac{u}{N})\overline{\chi_{h}}^u }$
and 
$ S'=\displaystyle{\sum_{u=0}^{k-k_0}\beta_{k-u} F_{N,\alpha_{h}} (\frac{u}{N})\overline{\chi_{h}}^u}. $
%First we remark that 
%$$ \sum_{u=k-k_0}^{k} \overline{ \beta_{k-u,\theta_{0},c_{1}}^{(\alpha)}} F_N(u) \cos (u \theta_{0}) 
%= O(k^{\alpha} -(k-k_{0})^\alpha) =o(1).$$
First we assume that $0\le h \le m$. Then $\alpha_{1}= \alpha_{h}$ and Property \ref{PROP1} and the assumption on  $\tau_{1}$ show that 
\begin{align*}
  S' &=  
 \sum_{u=0}^{k-k_0} \left( \sum_{j=1}^m K_{j} \overline{\chi_{j}} ^{k-u} c_{1}^{-1} (\chi_{j}) \right)
 \frac{(k-u)^{\alpha_{1}-1}}{\Gamma(\alpha_{1})} F_{N,\alpha_{h}} (\frac{u}{N})\overline{\chi_{h}}^u 
\\ &
  = K_{h} \overline{\chi_{h}} ^{k} c_{1}^{-1} (\chi_{h}) \sum_{u=0}^{k-k_0}
  \frac{(k-u)^{\alpha_{1}-1}}{\Gamma(\alpha_{1})} F_{N,\alpha_{1}} (\frac{u}{N}) 
  \\
 & + \left( \sum_{j=1,j \neq h}^m K_{j} \overline{\chi_{j}}^{k} c_{1}^{-1} (\chi_{j}) \right)
  \sum_{u=0}^{k-k_0} 
\frac{(k-u)^{\alpha_{1}-1}}{\Gamma(\alpha_{1})} 
F_{N,\alpha_{1}} (\frac{u}{N})
(\overline{\chi_{h}}\chi_{j})^u)+o(k^{\alpha_{1}})
\end{align*}
uniformly in $k$ (\ref{ZYG}).
Then an Abel summation provides that the quantity\\
$ \Bigr\vert\displaystyle{\sum_{u=0}^{k-k_0} (k-u)^{\alpha-1} F_{N,\alpha_{1}} (\frac{u}{N})}
 ( \overline{\chi_{h}}\chi_{j})^u\Bigl \vert  $
is bounded by \\
$ M_1 k_{0}^{\alpha-1}+\displaystyle{\sum_{u=0}^{k-k_0} 
\Bigl\vert (k-u-1)^{\alpha-1} F_{N,\alpha_{1}} (\frac{u+1}{N})
- (k-u)^{\alpha-1} F_{N,\alpha_{1}} (\frac{u}{N})\Bigr \vert}$
with $M_1$ no depending from $k$. 
Moreover 
\begin{align*} 
&\sum_{u=0}^{k-k_0} 
\Bigl\vert (k-u-1)^{\alpha_{1}-1} F_{N,\alpha_{1}} (\frac{u+1}{N})
- (k-u)^{\alpha_{1}-1} F_{N,\alpha_{1}} (\frac{u}{N})\Bigr\vert \\
&\le 
\sum_{u=0}^{k -k_0}
\vert (k-u-1)^{\alpha_{1}-1} - (k-u)^{\alpha-1} \vert 
\vert F_{N,\alpha_{1}}(\frac{u}{N})\vert\\
&+\sum_{u=0}^{k-k_0}
\vert  F_{N,\alpha_{1}} (\frac{u+1}{N})
-  F_{N,\alpha_{1}}(\frac{u}{N})\vert  \vert(k-u-1)^{\alpha_{1}-1}\vert 
\end{align*}
From the inequality (\ref{F}) (we have assumed $0<\frac{k}{N}<\delta$) we infer
$$ \sum_{u=0}^{k-k_0} 
\vert (k-u-1)^{\alpha_{1}-1} - (k-u)^{\alpha_{1}-1} \vert 
\vert F_{N,\alpha_{1}} (u)\vert\le M_2\sum_{w=k_{0}}^{k}
 v^{\alpha_{1}-2}$$
 with $M_{2}$ no depending from $k$. We finally get 
\begin{align*}
\sum_{u=0}^{k-k_0} 
\vert (k-u-1)^{\alpha_{1}-1} - (k-u)^{\alpha_{1}-1} \vert 
\vert F_{N,\alpha_{1}} (u)\vert &= O\left( \sum_{w=k_{0}}^{k}
 v^{\alpha_{1}-2}\right) \\
&= O \left( k_0 ^{\alpha_{1}-1}\right) =o(k^{\alpha_{1}})
 \end{align*} 
 Identically  Lemma \ref{INVERS3} and the main value 
 theorem provide (since $F_{N,\alpha}\in C^1[0,\delta ], \quad \forall \delta \in ]0,1[$).
 $$ 
 \sum_{u=0}^{k-k_0}
\vert  F_{N,\alpha_{1}} (\frac{u+1}{N})
-  F_{N,\alpha_{1}}(\frac{u}{N})\vert  \vert(k-u-1)^{\alpha_{1}-1}\vert \le M_3 \frac{k^\alpha_{1}}{N} =
o(k^{\alpha_{1}}).$$
  always with $M_3$ no depending from $N$.
By definition of $k_0$ and with Property \ref{PROP1} we have easily the existence of a constant $M_{4}$, always no depending from $k$,
satisfying for $\alpha_{1}>0$ 
$ \displaystyle{ \sum_{u=k-k_{0}}^k \beta_{k-u}^{(\alpha_{1})} F_{N,\alpha_{1}} (\frac{u}{N}) \le 
 M_4 k_{0}^{\alpha_{1}} =o(k^{\alpha_{1}})}$.
 Consequently for $\alpha_{1}>0$ and $0\le h \le m$ 
\begin{align*}
 & \sum_{u=0}^k  \beta_{k-u} F_{N,\alpha_{h}} (\frac{u}{N})\overline{\chi_{h}}^u
 \\
 & = K_{h }\overline{\chi_{h}} ^{k} c_{1}^{-1} (\chi_{h}) \sum_{u=0}^{k-k_0}
  \frac{(k-u)^{\alpha_{1}-1}}{\Gamma(\alpha_{1})} F_{N,\alpha_{1}} (\frac{u}{N}) +o(k^{\alpha-1})\\
  = K_{h }\overline{\chi_{h}} ^{k} c_{1}^{-1} (\chi_{h}) \sum_{u=0}^{k-}
  \beta_{k-u}^{(\alpha_{1})} F_{N,\alpha_{1}} (\frac{u}{N})o(k^{\alpha_{1}})
\end{align*}
 uniformly in $k$ with the definition of the constants
 $M_i$, $1\le i \le 4$. 
 For $h>m$ we obtain identically that 
  $$
   \sum_{u=0}^{k-k_{0}}  \beta_{k-u} F_{N,\alpha_{h}} (\frac{u}{N})\overline{\chi_{h}}^u =
   o(k^{\alpha_{1}}).
   $$
and we get the Lemma for $\alpha_{1}>0$.\\
Hence we assume in the rest of 
the demonstration that  $\alpha_{1} \in ]-\frac{1}{2},0[.$ Recall that now 
$\gamma\in ]\max (\frac{\alpha_{1}}{\beta},
\frac{-\alpha_{1}}{1-\alpha_{1}}),1[$.\\
We have to evaluate the sum 
 $\displaystyle{\sum_{u=k-k_0}^{k} \beta_{k-u} F_{N,\alpha_{h}} (\frac{u}{N})\overline{\chi_{h}}^u }$.
%$$ \sum_{u=k-k_{0}}^k \overline{\beta^{(\alpha)}_{k-u}} F_{\alpha}(\frac{u}{N})$$
%and 
 $F_{N,\alpha_{h}} \in C^1[0, \delta ]$ implies, for $\frac{k-k_{0}}{N}\le \frac{u}{N} \le \frac{k}{N} 
\le \delta < 1,$ 
$$ F_{N,\alpha_{h}} (\frac{u}{N})  - F_{N,\alpha_{h}} (\frac{k}{N}) + F_{N,\alpha_{h}} (\frac{k}{N}) =
 F_{N,\alpha_{h}} (\frac{k}{N})  + O(\frac{k_{0}}{N}) = 
 F_{N,\alpha_{h}} (\frac{k}{N})+o(k^{\alpha_{1}})$$
  uniformly in $k$ (see once a more the definition of $\gamma$ and $\tau_{1}$).\\
Hence we can write, uniformly in $k$, 
\begin{align*}
 \sum_{u=k-k_0}^{k} \beta_{k-u} F_{N,\alpha_{h}} (\frac{u}{N})\overline{\chi_{h}}^u 
&= 
\overline{\chi_{h}}^{k} \sum_{u=k-k_0}^{k} \beta_{k-u} F_{N,\alpha_{h}} (\frac{k}{N})\chi_{h}^{k-u}
+o(k^{\alpha_{1}})
\\
&
= - \overline{\chi_{h}}^{k}F_{N,\alpha_{h}} (\frac{k}{N}) \sum_{v=k_{0}+1}^{+\infty} 
\beta_{v} \chi_{h}^{v}+o(k^{\alpha_{1}}).
\end{align*}
If $0\le h \le m$ we get 
$$ \sum_{v=k_{0}+1}^{+\infty} 
\beta_{v} \chi_{h}^{v} = \sum_{v=k_{0}+1}^{+\infty}
 \left(\sum_{j=1} ^m K_{j} c_{1}^{-1} (\chi_{j}) \overline{\chi_{j}}^v\right)
 \frac{v^{\alpha_{1}-1}}{\Gamma(\alpha_{1})} \chi_{h}^{v} +o(k_{0}^{\tau_{1}}),
$$
that is also, with the definition  $k_{0} =k^\gamma$, $\gamma\in]\max (\frac{\alpha_{1}}{\tau_{1}},
\frac{-\alpha_{1}}{1-\alpha_{1}}),1[$, 
$$ \sum_{v=k_{0}+1}^{+\infty} 
\beta_{v} \chi_{h}^{v} = \sum_{v=k_{0}+1}^{+\infty} \left(  \sum_{j=1} ^m K_{j} c_{1}^{-1} (\chi_{j})\overline{\chi_{j}}^v\right)
\frac{v^{\alpha_{1}-1}}{\Gamma(\alpha_{1})} \chi_{h}^{v} +o(k^{\alpha_{1}}).
$$
We have 
$$ \sum_{v=k_{0}+1}^{+\infty} \left(  \sum_{j=1} ^m K_{j} c_{1}^{-1} (\chi_{j})\overline{\chi_{j}}^v\right)
 \frac{v^{\alpha_{1}-1}}{\Gamma(\alpha_{1})} \chi_{h}^{v}
= K_{h} c_{1}^{-1} (\chi_{h})\sum_{v=k_{0}+1}^{+\infty}\frac{v^{\alpha_{1}-1}}{\Gamma(\alpha_{1})} 
+R$$
 
 An Abdel summation provides $\vert R\vert \le M_{5} k_{0}^{\alpha_{1}-1} =o(k^{\alpha_{1}})$ uniformly in 
 $k$.
 
Hence we have 
$$ \sum_{u=k-k_0}^{k} \beta_{k-u} F_{N,\alpha_{h}} (\frac{u}{N})\overline{\chi_{h}}^u  
= -K_{h} c_{1}^{-1} (\chi_{h}) \overline{\chi_{h}}^k F_{\alpha_{1}}  (\frac{k}{N}) \sum_{v=k_{0}+1}^{+\infty}\frac{v^{\alpha_{1}-1}}{\Gamma(\alpha_{1})} 
+ o(k^{\alpha_{1}})$$
that is also
\begin{align*}
 \sum_{u=k-k_0}^{k} \beta_{k-u} F_{N,\alpha_{h}} (\frac{u}{N})\overline{\chi_{h}}^u  
&= K_{h} c_{1}^{-1} (\chi_{h}) \overline{\chi_{h}}^k F_{\alpha_{1}}  (\frac{k}{N} )\sum_{u=k-k_{0}}^{k}
\frac{\beta_{k-u}^{(\alpha_{1})}}{\Gamma(\alpha_{1})} 
+ o(k^{\alpha_{1}})\\
& = K_{h} c_{1}^{-1} (\chi_{h}) \overline{\chi_{h}}^k \sum_{u=k-k_{0}}^{k}
\frac{\beta_{k-u}^{(\alpha_{1})}}{\Gamma(\alpha_{1})} 
F_{\alpha_{1}}  (\frac{u}{N} )+ o(k^{\alpha_{1}})
\end{align*}
uniformly in $k$.
Since we have seen that the sum
$$
\sum_{u=0}^{k-k_0}\beta_{k-u} F_{N,\alpha_{h}} (\frac{u}{N})\overline{\chi_{h}}^u
$$
is equal to 
$$\overline{\chi_{h}} ^{k} c_{1}^{-1} (\chi_{h}) \sum_{u=0}^{k-k_0}
  \frac{\beta_{k-u}^{(\alpha_{1})}}{\Gamma(\alpha_{1})} F_{N,\alpha_{1}} (\frac{u}{N}) 
+ o(k^{\alpha_{1}})  $$
 we can also conclude, as for $\alpha_{1}>0$, that for $1\le h \le m$
$$
  \sum_{u=0}^{k}  \beta_{k} F_{N,\alpha_{h}}( \frac{u}{N}) \overline{\chi_{h}}^u
=K_{h} c_{1}^{-1} (\chi_{h}) \overline{\chi_{h}}^k \sum_{v=0}^{k}
\frac{\beta_{k-u}^{(\alpha_{1})}}{\Gamma(\alpha_{1})} 
F_{\alpha_{1}}  (\frac{v}{N} )+ o(k^{\alpha_{1}}).
$$
  Identically if $h>m$ we obtain 
 $\Bigl \vert \displaystyle{\sum_{u=0}^{k} \beta_{k-u} F_{N,\alpha_{h}} (\frac{u}{N})\overline{\chi_{h}}^u 
\Bigr \vert =  o(k^{\alpha_{1}})}$ uniformly in $k$.
 The uniformity is clearly provided by the uniformity in Lemma \ref{INVERS3} and by the previous 
 remarks.
 This last remark is sufficient to prove Lemma \ref{final}.
\end{proof}
Then Theorem \ref {COEF} is a direct consequence of the inversion formula and of Lemma 
\ref{final}.\section{Appendix}
\subsection{Estimation of a trigonometric sum}
\begin{lemma}\label{APPENDIX1}
Let $M_{0},M_{1}$ two integers with $0<M_{0}<M_{1}$, $\chi\neq 1$  and $f$ a function in 
$\mathcal C^1\left(]M_{0},M_{1}[\right)$  with for all $t\in ]M_{0},M_{1}[$ $f(t) = O(t^{\beta})$ 
and $f'(t) = O(t^{\beta-1})$. Then 
$$\Bigl \vert\sum_{u=M_{0}}^{M_{1}} f(u) \chi^u  \Bigr \vert = \Bigl \lbrace \begin{array}{l}
 O(M_{1}^\beta) \quad \mathrm{if} \quad \beta>0\\
O(M_{0}^\beta) \quad \mathrm{if} \quad \beta<0.
\end{array}$$
\end{lemma}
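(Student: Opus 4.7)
{proposal of Lemma \ref{APPENDIX1}}
The plan is to apply an Abel summation (summation by parts), exploiting the fact that the partial sums of the geometric series $\sum \chi^u$ are uniformly bounded when $\chi \neq 1$, together with the control on $f$ and $f'$ supplied by the hypotheses.

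First I would introduce the partial sums $T_u = \sum_{v=M_{0}}^{u} \chi^v$ for $u \ge M_{0}$, and note that since $\chi \neq 1$,
$$|T_u| = \Bigl| \chi^{M_0} \frac{\chi^{u-M_0+1} - 1}{\chi - 1}\Bigr| \le \frac{2}{|\chi - 1|} =: C,$$
uniformly in $u$. Writing $\chi^u = T_u - T_{u-1}$ (with $T_{M_0 - 1}:=0$) and performing the Abel transformation gives
$$\sum_{u=M_{0}}^{M_{1}} f(u)\chi^u = f(M_{1})\, T_{M_{1}} - \sum_{u=M_{0}}^{M_{1}-1} \bigl(f(u+1)-f(u)\bigr) T_u.$$
Taking absolute values and using $|T_u|\le C$, this is bounded by $C|f(M_{1})| + C\sum_{u=M_{0}}^{M_{1}-1} |f(u+1)-f(u)|$.

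Next I would control the increments via the mean value theorem: for each $u$ there exists $\xi_u \in (u,u+1)$ such that $|f(u+1)-f(u)| = |f'(\xi_u)| = O(\xi_u^{\beta-1}) = O(u^{\beta-1})$, using the hypothesis $f'(t)=O(t^{\beta-1})$. Hence
$$\sum_{u=M_{0}}^{M_{1}-1} |f(u+1)-f(u)| = O\!\left( \sum_{u=M_{0}}^{M_{1}-1} u^{\beta - 1}\right),$$
and a comparison with the integral $\int_{M_{0}}^{M_{1}} t^{\beta - 1}\,dt = \frac{M_{1}^{\beta}-M_{0}^{\beta}}{\beta}$ yields
$$\sum_{u=M_{0}}^{M_{1}-1} u^{\beta-1} = O\bigl(M_{1}^{\beta}\bigr) \text{ if } \beta>0, \qquad \sum_{u=M_{0}}^{M_{1}-1} u^{\beta-1} = O\bigl(M_{0}^{\beta}\bigr) \text{ if } \beta<0.$$

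Finally I would combine these estimates with the bound $|f(M_{1})| = O(M_{1}^{\beta})$. For $\beta > 0$ both terms are $O(M_{1}^{\beta})$, giving the first claim. For $\beta < 0$, since $M_{1}>M_{0}$ one has $M_{1}^{\beta}\le M_{0}^{\beta}$, hence $|f(M_{1})|=O(M_{0}^{\beta})$, and the increment sum is also $O(M_{0}^{\beta})$, yielding the second claim. There is no serious obstacle here; the only point worth care is the handling of the boundary term $f(M_{1})T_{M_{1}}$ in the regime $\beta<0$, where one must compare $M_{1}^{\beta}$ with $M_{0}^{\beta}$ rather than absorb it into the integral estimate.
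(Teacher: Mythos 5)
Your proof is correct and follows essentially the same approach as the paper's: Abel summation against the bounded partial geometric sums, the mean value theorem to control increments via $f'(t)=O(t^{\beta-1})$, and a comparison bound for $\sum u^{\beta-1}$. Your treatment is in fact a bit cleaner — by starting the partial sums $T_u$ at $M_0$ you avoid the extra boundary terms the paper carries, and you explicitly handle the boundary term $f(M_1)T_{M_1}$ in the $\beta<0$ case, a point the paper leaves implicit.
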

\begin{proof}{}
With an Abel summation we obtain, if $\sigma_{u}= 1 + \cdots + \chi^u$,  
$$
\sum_{u=M_{0}}^{M_{1}} f(u) \chi^u  = \sum_{u=M_{0}}^{M_{1}-1} \left( f(u+1) -f(u)\right) 
\sigma_{u} +f(M_{1}) \sigma_{M_{1}} + f(M_{0}) \sigma_{M_{0}-1} 
$$
and
\begin{eqnarray*}
\sum_{u=M_{0}}^{M_{1}-1} \left( f(u+1) -f(u)\right) 
\sigma_{u} 
&=& \left(f(M_{0}) +f(M_{1}) \right)\left( \frac{1}{1-\chi}\right) - 
\sum_{u=M_{0}}^{M_{1}-1} \left( f(u+1) -f(u) \right) \frac{\chi^{u+1}}{1-\chi}\\
& =& \sum_{u=M_{0}}^{M_{1}-1} f'(c_{u}) \frac{\chi^{u+1}}{1-\chi}+ \left(f(M_{0}) +f(M_{1}) \right)\left( \frac{1}{1-\chi}\right)
\end{eqnarray*}
with $c_{u}\in ]u,u+1[$. 
We have 
$$\Bigl \vert   \sum_{u=M_{0}}^{M_{1}-1} f'(c_{u}) \frac{\chi^{u}}{1-\chi}\Bigr\vert \le 
O\left( \sum_{u=M_{0}}^{M_{1}-1} u^{\beta-1} \right)$$
hence 
$$\Bigl \vert\sum_{u=M_{0}}^{M_{1}} f(u) \chi^u  \Bigr \vert = \Bigl \lbrace
 \begin{array}{l}
 O(M_{1}^\beta) \quad \mathrm{if} \quad \beta>0\\
O(M_{0}^\beta) \quad \mathrm{if} \quad \beta<0.
\end{array}$$
\end{proof}

 \bibliography{Toeplitzdeux}

\end{document}